\documentclass[12pt]{amsart}

\usepackage[utf8]{inputenc}

\usepackage{amsthm, enumitem, fullpage, amsrefs, tabularx, mathrsfs, mathtools}
\usepackage{thmtools}

\usepackage[colorlinks=true,allcolors=blue]{hyperref}

\usepackage[capitalize]{cleveref}

\newtheorem{theorem}{Theorem}[section]
\newtheorem{lemma}[theorem]{Lemma}
\newtheorem{proposition}[theorem]{Proposition}
\newtheorem{corollary}[theorem]{Corollary}

\newtheorem*{maintheorem}{Main Theorem}

\theoremstyle{definition}
\newtheorem{definition}[theorem]{Definition}

\newtheorem{setup}[theorem]{Setup}

\theoremstyle{remark}
\newtheorem{remark}[theorem]{Remark}

\newcommand{\R}{\mathbb{R}}
\newcommand{\C}{\mathbb{C}}
\newcommand{\N}{\mathbb{N}}
\newcommand{\cone}{{\mathscr C}}
\newcommand{\bulk}{\mathscr K}
\newcommand{\cc}{{\rm c}}
\newcommand{\volform}[1]{d\mu^{#1}}
\newcommand{\spinor}[1]{\Sigma #1}
\newcommand{\norm}[1]{\left\| #1 \right\|}
\newcommand{\abs}[1]{\left| #1 \right|}
\newcommand{\innerprod}[2]{\left\langle #1, #2 \right\rangle}
\newcommand{\Ltwoinnerprod}[2]{\left( #1, #2 \right)_{L^2}}
\newcommand{\sphere}{{\mathbb{S}}}
\newcommand{\FredsaOp}{\mathcal F_{\rm sa}}
\newcommand{\double}{{\mathsf D}}
\newcommand{\boundedOp}[1]{\mathcal L\left( #1 \right)}

\newcommand{\cl}[1]{\mkern1.7mu\overline{\mkern-1.7mu #1 \mkern-0.2mu}\mkern0.2mu}
\newcommand{\metriccl}[1]{\mkern2.5mu\overline{\mkern-2.5mu #1 \mkern-0.1mu}\mkern0.1mu}
\newcommand{\dom}[1]{\mathrm{dom}\left(#1\right)}

\DeclareMathOperator{\Cl}{Cl}
\DeclareMathOperator{\Dirac}{\mathcal D}
\DeclareMathOperator{\Brue}{\mathcal B}
\DeclareMathOperator{\EE}{\mathbf{E}}
\DeclareMathOperator{\FF}{\mathbf{F}}
\DeclareMathOperator{\LL}{\mathbf{L}}
\DeclareMathOperator{\HH}{\mathbf{H}}
\DeclareMathOperator{\id}{id}
\DeclareMathOperator{\specflow}{sf}
\DeclareMathOperator{\tr}{tr}
\DeclareMathOperator{\spec}{spec}
\DeclareMathOperator{\scal}{scal}
\DeclareMathOperator{\curvOp}{\mathcal R}
\DeclareMathOperator{\dist}{dist}
\DeclareMathOperator{\PP}{\mathcal{P}}
\DeclareMathOperator{\PPP}{\mathscr{P}}
\DeclareMathOperator{\gap}{gap}
\DeclareMathOperator{\End}{End}
\DeclareMathOperator{\Lip}{Lip}
\DeclareMathOperator{\Isom}{Isom}
\DeclareMathOperator{\supp}{supp}

\DeclareMathOperator{\coneLinkOp}{\mathscr S}

\begin{document}

\title{Lipschitz rigidity for scalar curvature on singular manifolds in odd dimensions}

\author[Lukas Schönlinner]{Lukas Schönlinner}
\address[Lukas Schönlinner]{Universität Augsburg, Universitätsstr.~14, 86159 Augsburg, Germany}
\email{lukas.schoenlinner@uni-a.de}

\keywords{Abstract cone operator, cone-like singularities, Lipschitz maps, scalar curvature bound, spectral flow, twisted Dirac operator}
\subjclass{Primary: 53C23, 53C24; Secondary: 53C27, 58J30}

\begin{abstract}
	The main result of this article is a Llarull-type rigidity statement for scalar curvature on Riemannian spin manifolds with cone-like singularities in odd dimensions. 
	The even dimensional analog was proven in an earlier work together with Simone Cecchini, Bernhard Hanke and Thomas Schick using index theory and the analysis of abstract cone operators, which applies to Dirac operators associated with generalized cone metrics.

	We will extend the analysis of abstract cone operators, apply it to twisted Dirac operators on singular manifolds and combine it with a spectral flow argument to prove the main result.
\end{abstract}

\maketitle

\tableofcontents

\section{Introduction and main result}

In recent years, the study of rigidity phenomena for scalar curvature has attracted considerable attention.
The interplay between the Dirac operator and positive scalar curvature via index theory and the Schr\"odinger-Lichnerowicz formula has been especially fruitful in this context. 
A classic example for this is the theorem of Llarull, which has been prototypical for many subsequent results.

\begin{theorem}[\cite{Llarull}*{Theorem B}]\label{LlarullThm}
	Let $n\geq 2$ and let $(M,g)$ be a closed connected Riemannian spin manifold of dimension $n$ with $\scal_g \geq n(n-1)$.
	Furthermore, let $f\colon (M,g)\to (\sphere^n, g_{\sphere^n})$ be a smooth 1-Lipschitz map of non-zero degree to the round sphere.

	Then $f$ is a Riemannian isometry. 
\end{theorem}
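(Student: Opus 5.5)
We argue as in Llarull's proof, using the spinor bundle of the sphere as a twisting bundle, an index (or, in odd dimensions, spectral flow) nonvanishing statement, and the Schrödinger--Lichnerowicz formula with an endomorphism estimate coming from the Lipschitz condition.

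\textbf{Even dimensions.} Let $S\to\sphere^n$ be the spinor bundle of the round sphere with its Levi-Civita connection, and let $E=f^*S$ with the pulled-back connection. Let $\Dirac_E$ be the Dirac operator on $\spinor{M}\otimes E$. By the Atiyah--Singer index theorem, $\operatorname{ind}\Dirac_E^+=\deg(f)\,\chi(\sphere^n)\neq 0$ up to sign. Hence there is a nonzero $\psi\in\ker\Dirac_E$. The twisted Schrödinger--Lichnerowicz formula reads
\[
\Dirac_E^2=\nabla^*\nabla+\tfrac{\scal_g}{4}+\curvOp^E ,
\]
and Llarull's pointwise estimate gives
\[
\curvOp^E\geq -\tfrac14\sum_{i\neq j}\mu_i\mu_j\geq -\tfrac{n(n-1)}{4},
\]
where $\mu_i\le 1$ are the singular values of $df$. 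Integrating $\innerprod{\Dirac_E^2\psi}{\psi}$ over $M$ then shows:
\begin{itemize}
\item $\psi$ is parallel, so $\abs{\psi}$ is a nonzero constant and $\psi$ vanishes nowhere;
\item $\scal_g=n(n-1)$;
\item equality holds in the endomorphism estimate at every point.
\end{itemize}
Equality in Llarull's algebraic lemma at a point where $\psi\ne0$ forces all $\mu_i=1$ when $n\geq3$. (For $n=2$ one instead uses $\scal_g=2=2\mu_1\mu_2$ together with $\mu_i\leq 1$.) Therefore $df$ is an isometry at every point, so $f$ is a local isometry. It is then a covering of $\sphere^n$, and since $\sphere^n$ is simply connected for $n\ge2$ and $M$ is connected, $f$ is a Riemannian isometry.

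\textbf{Odd dimensions.} Here the index vanishes, and I would use one of two equivalent routes.
\begin{itemize}
\item \emph{Product with a circle.} Consider $M\times \sphere^1_R$ and the map $f\times(\text{degree one map to }\sphere^1)$ into $\sphere^n\times \sphere^1$, composed with a $(1+\epsilon)$-Lipschitz degree-one map to $\sphere^{n+1}$ obtained by suspension and smashing. Take the scale $R\to\infty$.
\item \emph{Spectral flow.} Use a family of twisted operators $\Dirac_{E,t}$, twisted by $f^*S$ together with a path of connections built from the even-dimensional spinor bundle of $\sphere^{n+1}$ restricted along the equator. The spectral flow equals $\deg f$ times a nonzero topological constant, so some $\Dirac_{E,t}$ has nontrivial kernel.
\end{itemize}
The same Bochner argument applied to this kernel element yields the equality case.

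\textbf{Main obstacle.} The delicate step is the odd-dimensional rigidity, not the mere inequality. The product/limit argument only yields $\scal_g\ge n(n-1)$ up to $\epsilon$ and loses the kernel element in the limit. To get rigidity one needs a genuine harmonic spinor on $M$ itself. I would first try the spectral flow: the estimate shows $\Dirac_{E,t}^2\geq 0$ for all $t$, so any nonzero spectral flow forces kernel at some $t$. That kernel element is then parallel, with equality in the curvature estimate, and the conclusion follows exactly as in even dimensions.
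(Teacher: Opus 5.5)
Your even-dimensional argument is Llarull's original one and is fine, with one precision. The index equals $\pm\deg(f)\,\chi(\sphere^n)$ only for the total grading $\Sigma^{\pm}M\otimes f^*S^{\pm}$. With the grading of $\Sigma M$ alone you get $2^{n/2}\hat A(M)$, because $S$ is topologically trivial. The paper does not reprove this theorem (it is quoted from Llarull). Its proof of the Main Theorem, however, is exactly the spectral-flow route you settle on for odd dimensions, and that is where your proposal has a real gap: the step carrying the whole odd-dimensional proof is asserted, not justified. ``A path of connections built from the spinor bundle of $\sphere^{n+1}$ restricted along the equator'' does not specify a family. Moreover, ``the same Bochner argument'' fails for a general path. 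Along an arbitrary path of connections on $f^*S$ with gauge-equivalent endpoints, nothing bounds the curvature term below by $-\frac{n(n-1)}{4}$, and a kernel element at an intermediate parameter need not lead to Llarull's equality case. If you mean the clutching function of $\Sigma^{+}\sphere^{n+1}$ along the equator, that pins down the right $K^1$-class, hence $\specflow=\pm\deg f$, but it says nothing about the curvature along the path. Note also that nonzero spectral flow produces a kernel at some parameter without any estimate; the estimate is needed to locate that kernel.

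The missing ingredient is the specific family of B\"ar that the paper uses: $\nabla^s_X=\nabla_X+(s-\frac12)c(X)$ on $S=\Sigma\sphere^n$, $s\in[0,1]$.
At $s=0,1$ these are the two Killing connections. They are flat and trivialize $S$, so $\Dirac_{E,1}=V^{-1}\Dirac_{E,0}V$ for a unitary gauge transformation $V$. Getzler's formula then gives $\abs{\specflow}=\abs{\deg f}$, as computed by B\"ar and by Li--Su--Wang.
Since $c$ is parallel and $\nabla$ is torsion free, $R^{\nabla^s}(X,Y)=R(X,Y)+(s-\frac12)^2[c(X),c(Y)]$. Using $R(X,Y)=-\frac14[c(X),c(Y)]$, which is the flatness of the Killing connections, this equals $4s(1-s)\,R(X,Y)$. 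Hence $\curvOp^E_s=4s(1-s)\,\curvOp^E_{1/2}\geq -s(1-s)\,n(n-1)$.
For $0\neq u\in\ker\Dirac_{E,s_0}$, Schr\"odinger--Lichnerowicz and $\scal_g\geq n(n-1)$ give $0\geq\|\nabla u\|^2+\frac14\int_M\bigl(\scal_g-4s_0(1-s_0)n(n-1)\bigr)|u|^2$, which forces $s_0=\frac12$.
At $s_0=\frac12$, $\nabla^{1/2}$ is the Levi-Civita spin connection and $\Dirac_{E,1/2}$ is literally Llarull's operator. Your even-dimensional equality analysis (parallel nowhere-vanishing $u$, all $\mu_i=1$, covering argument) then applies verbatim.
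With this inserted, your odd-dimensional proof is the smooth, closed-manifold version of the argument in the paper's proof of the Main Theorem. You should drop the ``product with a circle'' route: as you note yourself, it gives no rigidity, so it is not equivalent.
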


This result was later generalized in several directions. 
Goette and Semmelmann \cite{GoetteSemmelmann02} proved that the target manifold can be replaced by certain Riemannian manifolds with positive curvature operators and nonnegative Euler characteristics. 

More recently, Larull’s theorem was generalized to manifolds with boundary, as in the work of Lott \cite{Lott}, who proved a Goette-Semmelmann-type in that case.
Another instance is a rigidity statement for warped product metrics, which was proven by Cecchini and Zeidler \cite{CecchiniZeidler24} in odd dimensions and by B\"ar, Brendle, Hanke and Wang \cite{BaerBrendleHankeWang24} for all dimensions. 

A further recent development is the extension of Llarull's theorem to metrics with lower regularity in the domain and to comparison maps that are merely Lipschitz.
See the work of Cecchini, Hanke and Schick \cite{CHS}, and a follow-up work together with the author \cite{CHSS} as well as the work of Lee and Tam \cite{LeeTam22}.
Also in line of these results is the work of B\"ar \cite{Baer_2024}, where comparison maps of Lipschitz regularity are considered.

These results were largely influenced by Gromov's work, and we refer to his \emph{Four Lectures} \cite{GromovFourLectures} for further reading.

In the present article, the aim is to prove a Llarull-type rigidity statement in which the domain is allowed to be non-compact with cone-like singularities.
This is a continuation of \cite{CHSS}*{Theorem 1.8}, where the even-dimensional analog of the Main Theorem below was shown. 
We now close the gap by proving the odd-dimensional case. 

Before we state the main result, we recall what we mean by Riemannian metrics with cone-like singularities. 

\begin{definition}
	Let $M$ be a closed smooth manifold. Let $\vartheta>0$ and let
	\begin{equation*}
		g_r\in C^\infty((0,\vartheta), C^\infty(M, T^*M\otimes T^*M))
	\end{equation*}
	be a smooth family of Riemannian metrics on $M$.
	Assume that there exists a $C^2$-Riemannian metric
	\begin{equation*}
		g_0 \in C^2(M, T^*M\otimes T^*M)
	\end{equation*}
	such that
	\begin{equation*}
		g_r \overset{r\to 0}{\longrightarrow} g_0 \qquad \text{in } C^2(M, T^*M\otimes T^*M)
	\end{equation*}
	and that for all $p\in M$ we have
	\begin{equation*}
		\lim_{r\to 0} \abs{r\cdot \partial_r g_r}_{(T_pM, g_r(p))} = 0 \qquad\text{and}\qquad \lim_{r\to 0} \abs{r^2\cdot \partial_r^2 g_r}_{(T_pM, g_r(p))} = 0.
	\end{equation*}
	The metric $\cone g_r := dr^2 + r^2g_r$ on $M\times (0,\vartheta)$ is called a \emph{generalized cone metric}.

	If the family $g_r = g$ is constant in $r$, we will call the metric $\cone g $ a \emph{straight cone metric}. 
\end{definition}

\begin{definition}\label{ConeManifold}
	A smooth Riemannian manifold $(N, G)$ is a \emph{compact Riemannian manifold with cone-like singularities} if
	\begin{itemize}
		\item there is a compact smooth submanifold $\bulk\subset N$ with boundary and if
		\item for some $0<\vartheta \leq 1$, there is a Riemannian isometry
		      \begin{equation*}
			      \nu \colon (N\setminus \bulk, G\vert_{N\setminus \bulk}) \to (\partial \bulk\times (0,\vartheta), \cone g_r),
		      \end{equation*}
		      where $\cone g_r$ is a generalized cone metric.
	\end{itemize}
	We refer to $\partial\bulk$ as the \emph{link} and to $\bulk$ as the \emph{bulk}.
\end{definition}

Let $(N, G)$ be a connected smooth Riemannian manifold with cone-like singularities. 
We denote its metric completion with respect to the path metric induced by $G$ with $\metriccl{N}$.
Furthermore, we refer to the finite set $\metriccl{N}\setminus N = \{x_1, \ldots, x_\ell\}$ as the \emph{cone points} of $N$. 
If $f\colon (N,G)\to (X,g_X)$ is a Lipschitz continuous map to a smooth closed Riemannian manifold $X$, then $\metriccl{f}\colon \metriccl{N}\to X$ denotes its unique continuous continuation. 
Here the Lipschitz constant is understood with respect to the path metrics on $N$ and $X$.

We define the degree of $f$ in the following way.
Let $0<\varepsilon<\frac\pi2$ such that, for $\varepsilon$-balls around distinct cone points, we have $B_\varepsilon\left(\metriccl{f}(x_i)\right)\cap B_\varepsilon\left(\metriccl{f}(x_j)\right) = \emptyset$ for all $i\neq j$. 
By possibly increasing $\bulk$, we can achieve $f(N\setminus\mathrm{int}(\bulk))\subset V:= \bigcup_{i=1}^\ell B_\varepsilon\left(\metriccl{f}(x_i)\right)$.
If $N$ is oriented, then $\deg(f)$ is defined as the homological mapping degree of the map of pairs $f\colon (\bulk, \partial\bulk)\to (\sphere^{n+1}, V)$. 

By Rademacher's theorem, $f$ is differentiable almost everywhere.
If $f$ is differentiable at $p\in N$, we say that $f$ is \emph{area non-increasing} at $p$ if the induced map on two-vectors
\begin{equation*}
	\Lambda^2 d_pf \colon \Lambda^2T_pN \to \Lambda^2T_{f(p)}X
\end{equation*}
has norm smaller or equal to 1. 	
It was already realized by Llarull \cite{Llarull}*{Theorem C} that, in dimension $n\geq 3$, \cref{LlarullThm} holds true under the weaker assumption that $f$ is area non-increasing instead of 1-Lipschitz.

\begin{maintheorem}\label{MainTheorem}
	Let $n\geq 2$ be even and let $(N,G)$ be a compact connected oriented Riemannian manifold with cone-like singularities of dimension $n+1$ admitting a spin structure with $\scal_G \geq (n+1)n$. 
	Furthermore, let $f\colon (N,G)\to (\sphere^{n+1}, g_{\sphere^{n+1}})$ be a Lipschitz continuous map of non-zero degree to the round sphere that is area non-increasing almost everywhere. 
	
	Let $\metriccl{f}\colon \metriccl{N}\to \sphere^{n+1}$ be the metric extension of $f$ and let $\metriccl{N}\setminus N= \{x_1, \ldots, x_\ell\}$ be the cone points of $N$.
	
	Then $f$ is a smooth Riemannian isometry onto $\sphere^{n+1}\setminus \left\{\metriccl f(x_1), \ldots \metriccl f(x_\ell)\right\}$. In particular, $N$ is diffeomorphic to a punctured sphere.
\end{maintheorem}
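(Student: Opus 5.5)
We will follow the odd-dimensional strategy for Llarull's theorem (as in Cecchini–Zeidler and Li–Su–Wang) and combine it with the cone analysis of \cite{CHSS}. The dimension of $N$ is $n+1$ with $n$ even, so it is odd. Let $S\to\sphere^{n+1}$ be the spinor bundle of the sphere, and let $E=\metriccl f^{*}S$. Since $f$ is only Lipschitz, we first replace $E$ by a smooth bundle with a connection pulled back along smooth approximations of $f$; alternatively we work directly with Lipschitz bundle data, as in \cite{CHS}. On $N$ we consider the family of twisted Dirac operators
\begin{equation*}
	\Dirac_t = \Dirac_{\spinor N\otimes E} + t\,\Phi ,\qquad t\in[0,1],
\end{equation*}
where $\Phi$ is a bundle endomorphism. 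There are two natural choices. One is a self-adjoint unitary coming from Clifford multiplication by a unit vector field on the sphere, dualized along $f$. The other is a path $u_t$ of unitaries connecting $\id$ to the one induced by $\metriccl f^{*}$ of the generator of $K^{-1}(\sphere^{n+1})$, so that the relevant family is $\Dirac_{u_t}$ on $\spinor N\otimes \C^N$. In either case the spectral flow of the family equals a topological index, and this index is $\deg(f)$ times the degree-one index, hence nonzero. This identification is the odd analogue of the index computation in \cite{CHSS}*{Theorem 1.8}.

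First we make the family analytically sound on the singular manifold. Near each cone point we write $\Dirac_t$ in cone form $r^{-1}(\partial_r$-part $+\coneLinkOp_r)$ and extend the abstract cone operator theory to bounded perturbations. We then show that the closure (minimal domain) of each $\Dirac_t$ is self-adjoint and Fredholm, and that $t\mapsto\Dirac_t$ is gap-continuous in $\FredsaOp$, so that $\specflow$ is defined. The key point is that for $n\ge2$, $\scal\ge n(n+1)$ and $g_r\to g_0$, the link operator has no spectrum in $(-\tfrac12,\tfrac12)$ near $r=0$. This follows from the Lichnerowicz estimate on the link of dimension $n$, where the scalar curvature of $g_r$ is at least about $n(n-1)$ because $\scal_G\ge n(n+1)$ and the cone terms are asymptotically small. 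Consequently minimal and maximal domains coincide, and integration by parts holds without boundary terms at the tips. This step also gives the spectral flow formula. Since the endomorphism $\Phi$ is locally constant near each cone point (because $f(N\setminus\bulk)$ lies in small balls), we can cut off and use the relative index/excision argument, reducing the formula to the closed case and obtaining $\specflow(\Dirac_t)=\pm\deg f\neq0$.

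Because the spectral flow is nonzero, there is some $t$ and some $\psi\neq0$ in the domain with $\Dirac_t\psi=0$. We then apply the Schrödinger–Lichnerowicz formula. The curvature term of $E$ is bounded below by $-\tfrac{n(n+1)}4$, using that $f$ is area non-increasing almost everywhere. The term from $\Phi$ involves $t^2$ and the gradient $[\Dirac,\Phi]$, which is controlled by the Lipschitz constant. Following the Li–Su–Wang type estimate, with the choice of $\Phi$ adapted so that its Clifford commutator is absorbed by the $t^2\Phi^2$ term and the $\scal/4$ term, we obtain that every inequality is an equality. This gives the following conclusions. First, $\scal_G=n(n+1)$. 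Second, $\psi$ is parallel for a modified connection and so has constant length, the spinor being nonzero almost everywhere by unique continuation. Third, $\Lambda^2 df$ is an isometry almost everywhere. Finally, the mean-curvature-type term forces the relevant vector field to match.

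From this equality case, as in \cite{CHS} and \cite{CHSS}, $f$ is a local isometry almost everywhere, hence $1$-Lipschitz and locally distance preserving. It is therefore a smooth Riemannian local isometry onto its image, a covering of $\sphere^{n+1}$ minus finitely many points. Since the punctured sphere is simply connected for $n+1\ge3$, and the degree is nonzero, $f$ is a Riemannian isometry onto $\sphere^{n+1}\setminus\{\metriccl f(x_i)\}$.

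We expect the main obstacle to be the first analytic step together with the spectral flow computation: establishing self-adjointness and continuity of the perturbed cone operators (the extension of the abstract cone analysis), and showing that the spectral flow on the singular $N$ equals $\deg f$ despite the cone points. As a first attempt at the latter, we would glue in caps at the links and use additivity of spectral flow under cutting along $\partial\bulk$.
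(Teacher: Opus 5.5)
Your outline matches the paper's: cone analysis giving a gap-continuous path of self-adjoint Fredholm operators, nonzero spectral flow, a harmonic spinor, equality in the Schr\"odinger--Lichnerowicz formula, then local-to-global. However, the step that makes the argument work is not pinned down, and your first family does not work.

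For $\Dirac_t=\Dirac+t\Phi$ with $\Phi$ a self-adjoint unitary built from a unit vector field, the endpoints are in general neither invertible nor unitarily conjugate. So this spectral flow is not a homotopy invariant of $f$ and has no reason to equal $\pm\deg f$. Worse, in odd dimension $n+1$ the complex volume element is central and acts as $\pm1$ on $\spinor N$, so no nonzero endomorphism anticommutes with all $c(e_j)\otimes\id$. Hence $(\Dirac+t\Phi)^2$ always contains a \emph{first-order} cross term that cannot be absorbed pointwise. Bounding $[\Dirac,\Phi]$ by the Lipschitz constant also cannot give a sharp inequality when only $\Lambda^2 df$ is controlled.

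The paper, following B\"ar and Li--Su--Wang, perturbs the \emph{connection} instead: $\nabla^s=\nabla^{\spinor\sphere^{n+1}}+(s-\tfrac12)c(\cdot)$, $s\in[0,1]$. Each $\Dirac_{E,s}$ is then a genuine twisted Dirac operator for a metric connection whose curvature is $4s(1-s)$ times the spin curvature. This gives $\curvOp^E_s\ge -s(1-s)n(n+1)\ge-\tfrac14 n(n+1)$, with equality only if $s=\tfrac12$ and $d_pf$ is an isometry. Moreover, $\nabla^0$ and $\nabla^1$ are flat via Killing spinors, which gives $\cl\Dirac_{E,s}=(1-s)\cl\Dirac_{E,0}+sV^*\cl\Dirac_{E,0}V$ and makes the spectral flow topological. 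Your second option is this family only if $\Dirac_{u_t}$ means the straight-line path $(1-t)\Dirac+t\,u^*\Dirac u$ (connections $d+t\,u^{-1}du$) and $u$ is this specific clutching map pulled back by $f$. A conjugation path $u_t^*\Dirac u_t$ consists of operators unitarily equivalent to the invertible untwisted $\cl\Dirac$ and has zero spectral flow. For a general representative of the generator, the curvature term need not obey the sharp bound.

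The computation $\abs{\specflow}=\abs{\deg f}$ also rests on a false premise. The twisting term $B_s=(s-\tfrac12)\sum_i c(e_i)\otimes c(f_*e_i^G)$ involves $df$, and $f(N\setminus\bulk)$ lying in small balls does not make it vanish, or even be constant, near the tips. The paper first deforms $G$ to a straight cone metric and $f$ to a smooth map that is constant near all cone points. It shows the two-parameter family stays gap-continuous (\cref{ConeDiracGapTopology}), so the spectral flow is unchanged (\cref{specFlowDeformation}). Only then, with $B_s$ vanishing near the tips, does Getzler's formula $\specflow=(\varepsilon/\pi)^{1/2}\int_0^1\tr\bigl(B_s e^{-\varepsilon\cl\Dirac_{E,s}^2}\bigr)\,ds$ localize the problem; trace-class-ness comes from Chou's results for straight cones. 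This lets one pass to a double $\double\Omega$ and quote the closed case. Your ``relative index/excision'' or ``caps and additivity'' would need both this deformation and a localization theorem that you do not supply.

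There are also smaller gaps:
\begin{itemize}
\item Self-adjointness and discreteness of the spectrum are asserted rather than derived; the paper needs the adjoint parametrix $\PPP^*$ and a compact embedding of the domain.
\item The end-game skips the orientation argument needed to turn ``$d_pf$ isometric a.e.'' into a metric local isometry for a merely Lipschitz map.
\item It also skips the argument that $f$ never hits the points $\metriccl f(x_i)$.
\end{itemize}
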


There are some similarities to a version of Llarull's theorem for $L^\infty$-metrics in \cite{ChuLeeZhu24}. 
However, the setting is quite different. 
The metric $G$ in \cref{ConeManifold} is an $L^\infty$-metric only if the link $\partial \bulk$ is empty or consists of a disjoint union of spheres. 
On the other hand, $L^\infty$-metrics can be more flexible in a neighborhood of the singular points than generalized cone metrics. 

A different Llarull type comparison result, which applies to manifolds with iterated conical singularities and non-empty boundaries, is contained in \cite{JovanovicWang}.

The article is structured as follows.
In \cref{sec:AbstractCone}, we recall the definition of abstract cone operators from \cite{CHSS} and extend the analysis with conditions that guarantee self-adjointness of the operator and compactness of the resolvent.

Subsequently, it is shown in \cref{sec:DiracOnSingularManifolgs} that the Dirac operator on an odd-dimensional manifold with cone-like singularities, twisted by certain pullback bundles, is a self-adjoint abstract cone operator with discrete spectrum and finite multiplicities.
In particular, this implies the Fredholmness of the operator. 

\cref{sec:DeformationOfSingularDirac} is concerned with families of twisted Dirac operators that are continuous in the gap topology. 
This ensures a well-defined spectral flow.
Building on this, we perform a deformation of an arbitrary cone metric to a straight cone metric and a deformation of the Lipschitz comparison map to a smooth map that is constant in a neighborhood of the singular points. 
This plays a key role in the argument, because, together with a formula of Getzler \cite{Getzler}, it enables us in \cref{sec:SpectralFlowFormula} to do a doubling argument and reduce the computation of the spectral flow to the closed case treated in \cite{LiSuWang} and \cite{Baer_2024}.

Finally, everything is put together in \cref{sec:MainTheoremProof} to prove the Main Theorem.

\subsubsection*{Acknowledgements}

This is part of my doctoral dissertation project and I like to thank my advisor Bernhard Hanke for his continuous support and Helge Frerichs for his feedback on an earlier version of this article.
I am also grateful to the University of G\"ottingen for their hospitality during the summer term 2025, and to Thomas Schick for very helpful discussions. 

This work was supported by the SPP 2026 "Geometry at Infinity" funded by the Deutsche Forschungsgemeinschaft (DFG, German Research Foundation).


\section{Abstract cone operators}\label{sec:AbstractCone}

We recall the definition of abstract cone operators from \cite{CHSS}.
The construction of these singular operators is based on ideas from \cites{BrueningSeeley87,Bruening90}.

Let $\EE, \FF$ be separable Hilbert spaces with decompositions
	\begin{equation*}
		\EE = \EE_{\rm bulk} \oplus \EE_{\rm cone}, \qquad \FF = \FF_{\rm bulk} \oplus \FF_{\rm cone}
	\end{equation*}
	and assume that there exists a separable Hilbert space $\LL$ together with a real number $0<\vartheta\leq1$ and two Hilbert space isometries
	\begin{equation*}
		\Phi_{\EE} \colon \EE_{\rm cone} \overset{\sim}{\longrightarrow} L^2((0,\vartheta), \LL), \quad \Phi_{\FF} \colon \FF_{\rm cone} \overset{\sim}{\longrightarrow} L^2((0,\vartheta), \LL).
	\end{equation*}
	We identify $\EE_{\rm cone}$ and $\FF_{\rm cone}$ with $L^2((0,\vartheta), \LL)$ via these isometries.
	
	For $\varphi \in C_\cc^\infty([0,\vartheta), \R)$ and $u = (u_b,u_c)\in \EE_{\rm bulk}\oplus L^2((0,\vartheta), \LL)$, we define
	\begin{align*}
		\varphi \cdot u =&\, (0, \varphi \cdot u_c) \in \EE,\\
		(1-\varphi)\cdot u =&\, (u_b, (1-\varphi) \cdot u_c)\in \EE.
	\end{align*}
	
	Furthermore, let 
	\begin{equation*}
		S_0 \colon \LL\supset \dom{S_0}\to \LL
	\end{equation*}
	be an essentially self-adjoint operator and let
	\begin{equation*}
		S_1\colon (0,\vartheta) \to \boundedOp{\dom{S_0},\LL}
	\end{equation*}
	be an essentially bounded measurable map. 
	Here $\dom{S_0}$ is equipped with the graph norm of $S_0$.
	Let $\cl S_0 \colon \LL\supset \dom{\cl S_0}\to \LL$ be the unique self-adjoint extension of $S_0$ and, for $r\in (0,\vartheta)$, let $\cl S_1(r)\colon \LL\to \LL$ be the unique bounded extension of $S_1(r)$.

	We define the operators
	\begin{equation*}
		\coneLinkOp_0 \colon L^2((0,\vartheta), \LL) \supset C_\cc^\infty((0,\vartheta), \dom{S_0}) \to L^2((0,\vartheta), \LL), \quad \coneLinkOp_0 u(r) :=S_0 u(r)
	\end{equation*}
	and
	\begin{equation*}
		\coneLinkOp_1 \colon L^2((0,\vartheta), \LL) \supset C_\cc^\infty((0,\vartheta), \dom{S_0}) \to L^2((0,\vartheta), \LL), \quad \coneLinkOp_1 u(r) :=S_1(r) u(r), \quad \mathrm{a.e.}
	\end{equation*}

	By $C_{\cc,0}^\infty([0,\vartheta),\R)$, we denote the space of compactly supported smooth functions on $[0,\vartheta)$ that are equal to 1 in a neighborhood of 0.

\begin{definition}
	A closeable densely defined linear operator
	\begin{equation*}
		\Brue\colon \EE \supset \dom{\Brue} \to \FF
	\end{equation*}
	is called an \emph{abstract cone operator} with link operator $S_0$ and perturbation $S_1$ if the following conditions hold.
	\begin{enumerate}[start=0,label=\textup{(AC\arabic*)}]
		\item (Locality of domain) For all $\varphi\in C_{\cc,0}^\infty([0,\vartheta),\R)$, we have $\varphi\cdot \dom{\Brue}\subset \dom{\Brue}$. \label{ACOpLocalityOfDomain}
		\item (Domain over cone part) We have
		\begin{equation*}
			C_\cc^\infty((0,\vartheta), \dom{S_0})\subset \dom{\Brue},
		\end{equation*} 
		and the inclusion 
		\begin{equation*}
			\varphi \cdot C_\cc^\infty((0, \vartheta), \dom{S_0}) \subset \varphi\cdot \dom{\Brue}
		\end{equation*}
		is dense with respect to the graph norm of $\Brue$ for all $\varphi\in C_{\cc,0}^\infty([0,\vartheta),\R)$.
		\item (Locality of operator) We have 
		\begin{equation*}
			\Brue \left(C_\cc^\infty((0,\vartheta), \dom{S_0})\right)\subset \FF_{\rm cone},
		\end{equation*}
		and for all $\varphi, \psi \in C_{\cc,0}^\infty([0,\vartheta),\R)$ with $\psi\vert_{\supp \varphi} = 1$, it follows that
		\begin{equation*}
			\varphi \Brue(1-\psi) = 0.
		\end{equation*}
		\item (Product structure over the cone part) The equality 
		\begin{equation*}
			\Brue u(r) = \partial_r u(r) + \tfrac1r \left(S_0 + S_1(r)\right)u(r)
		\end{equation*}
		holds for all $u\in C_\cc^\infty((0,\vartheta), \dom{S_0})$ and almost all $r\in (0,\vartheta)$.
		\item (Spectral gap) The spectrum of $\cl S_0$ satisfies
		\begin{equation*}
			\spec\left(\cl S_0\right) \cap \left[-\tfrac12, \tfrac12\right] = \emptyset.
		\end{equation*}\label{ACOpSpectralGap}
		\item (Small perturbation) For almost all $r\in (0,\vartheta)$, the operator 
		\begin{equation*}
			\cl S_0^{-1} S_1(r) \colon \dom{S_0} \to \LL
		\end{equation*}
		extends to a bounded operator $\cl S_0^{-1}\cl S_1(r)\colon \LL\to \LL$, and we have the operator norm bounds
		\begin{align*}
			\norm{\cl S_0^{-1}\cl S_1}_{L^\infty((0,\vartheta), \boundedOp{\LL})}\leq& \inf_{s\in \spec(\cl S_0)} \abs{\frac{2s+1}{4s}}, \\
			\norm{\cl S_1 \cl S_0^{-1}}_{L^\infty((0,\vartheta), \boundedOp{\LL})}\leq& \inf_{s\in \spec(\cl S_0)} \abs{\frac{2s-1}{4s}}.
		\end{align*}\label{ACOpSmallPerturbation}
	\end{enumerate}
\end{definition}

There are closable operators
\begin{align*}
	\partial_r\colon L^2((0,\vartheta), \LL) &\supset C_\cc^\infty((0, \vartheta), \LL) \to L^2((0,\vartheta), \LL),\\
	\tfrac1r \coneLinkOp_0\colon L^2((0,\vartheta), \LL) &\supset C_\cc^\infty((0, \vartheta), \dom{S_0}) \to L^2((0,\vartheta), \LL)
\end{align*}
with minimal closed extensions $\cl\partial_r$ and $\cl{\frac1r \coneLinkOp_0}$.
Furthermore, the restriction of $\Brue$ to smooth sections with compact support on the cone part defines a densely defined closable operator
\begin{equation*}
	\Brue_{\rm cone} \colon L^2((0,\vartheta), \LL) \supset C_\cc^\infty((0, \vartheta), \dom{S_0}) \to L^2((0,\vartheta), \LL).
\end{equation*}

We recall the main results about abstract cone operators from \cite{CHSS}*{Theorem 2.7}.

\begin{theorem}\label{AbstractConePrevResults}
	If $\Brue$ is an abstract cone operator, then
	\begin{enumerate}[label = \textup{(\alph*)}]
		\item $\dom{\cl \Brue_{\rm cone}} = \dom{\cl\partial_r}\cap \dom{\cl{\frac1r \coneLinkOp_0}}$.\label{AbstractConeDomainNearCone}
		\item The graph norm on $ \dom{\cl \Brue_{\rm cone}}$ is equivalent to the Sobolev $1$-norm
		\begin{equation*}
     		\norm{u}_{H^1_{\rm cone}} := \left(  \norm{u}^2_{L^2((0,\vartheta), \LL)} + \norm{\cl\partial_r u}^2_{L^2((0,\vartheta), \LL)}  + \norm{\cl{\tfrac{1}{r} \coneLinkOp_0} u}^2_{L^2((0,\vartheta), \LL)}\right)^{\tfrac{1}{2}} .
		\end{equation*} 
		\item For every $\varphi\in C_{\cc,0}^\infty([0,\vartheta),\R)$, the graph norm on $\dom{\cl\Brue}$ is equivalent to the Sobolev $1$-norm
		\begin{equation*}
			\norm{u}_{H^1} := \left( \norm{u}^2_{\EE} + \norm{\cl\Brue((1-\varphi)u)}_{\FF}^2 + \norm{\cl\partial_r (\varphi\cdot u)}^2_{L^2((0,\vartheta), \LL)} + \norm{\cl{\tfrac1r \coneLinkOp_0} (\varphi\cdot u)}^2_{L^2((0,\vartheta), \LL)}\right)^{\tfrac12}.
		\end{equation*}
		\item If $\cl\Brue$ admits an interior parametrix (see \cite{CHSS}*{Definition 2.4}), then $\cl \Brue$ is Fredholm.
	\end{enumerate}
\end{theorem}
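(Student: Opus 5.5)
The statement is recalled from \cite{CHSS}*{Theorem 2.7}, so the plan is to reconstruct that argument. The core of (a) and (b) is a Hardy-type estimate on the model cone, derived from the spectral gap \ref{ACOpSpectralGap}. I would first treat the unperturbed model operator $\partial_r + \frac1r \coneLinkOp_0$ on $C_\cc^\infty((0,\vartheta),\dom{S_0})$. Decomposing $u$ along the spectral resolution of $\cl S_0$ reduces everything to scalar operators $\partial_r + \frac sr$ with $\abs{s}>\frac12$. For $u\in C_\cc^\infty((0,\vartheta),\dom{S_0})$ I would compute
\begin{equation*}
\norm{\partial_r u + \tfrac1r S_0 u}^2 = \norm{\partial_r u}^2 + \norm{\tfrac1r S_0u}^2 + \int_0^\vartheta \tfrac{1}{r^2}\innerprod{S_0u}{u}\,dr
\end{equation*}
by integrating the cross term by parts. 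The last term is bounded below by $-\frac1{4}\cdot 4\int \frac{1}{r^2}\abs{\cdot}$-type quantities, using the one-dimensional Hardy inequality $\int \frac{\abs{u}^2}{r^2}\le 4\int\abs{\partial_r u}^2$. Combined with $\abs{s}\geq \frac12+\delta$, this should give
\begin{equation*}
\norm{\partial_r u}^2+\norm{\tfrac1r S_0u}^2\le C\norm{(\partial_r+\tfrac1r S_0)u}^2 .
\end{equation*}
It is cleaner to do this per spectral value: for $s>0$, compute $\norm{(\partial_r+\frac sr)v}^2=\norm{v'}^2+(s^2-s)\norm{v/r}^2$ by $\int 2\frac sr v v' = s\int v^2/r^2$. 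For $s<0$ the coefficient is $s^2+\abs{s}$, which is fine. For $s>0$, Hardy gives $\norm{v'}^2\ge\frac14\norm{v/r}^2$, so the total is at least $(s-\frac12)^2\norm{v/r}^2$, and some fraction of $\norm{v'}^2$ can be kept. This is exactly where the gap $\frac12$ enters.

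The second step handles the perturbation $\frac1r\coneLinkOp_1$. Writing $\frac1r S_1 u = S_1 S_0^{-1}\cdot \frac1r S_0 u$, its norm is controlled by the bound $\norm{\cl S_1\cl S_0^{-1}}_{L^\infty}$ times $\norm{\frac1r S_0 u}$. The constants in \ref{ACOpSmallPerturbation}, namely $\abs{\frac{2s\mp1}{4s}}$, should match exactly the sharp constant from the per-spectral-value estimate. So $\Brue_{\rm cone}$ and the model operator have equivalent graph norms on the core, with the first inequality of \ref{ACOpSmallPerturbation} presumably used for the adjoint direction or the reverse bound. This equivalence yields (b) on the core. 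Taking closures then gives (a): the closure of the core in the $H^1_{\rm cone}$-norm is $\dom{\cl\partial_r}\cap\dom{\cl{\frac1r\coneLinkOp_0}}$. Showing that this intersection is no larger than the closure requires a density argument via cutoffs in $r$ and spectral truncation of $S_0$.

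For (c), split $u=\varphi u+(1-\varphi)u$. Locality \ref{ACOpLocalityOfDomain} and (AC2) show that $\varphi u\in\dom{\cl\Brue_{\rm cone}}$, with the commutator $[\Brue,\varphi]=\varphi'$ bounded by (AC3). Applying (b) to $\varphi u$ then gives the equivalence. For (d), glue an interior parametrix with a cone-part parametrix. By (b), the unit ball of $\dom{\cl\Brue_{\rm cone}}$ supported near $r=0$ has small $L^2$-norm: Hardy gives $\norm{u}\le \varepsilon\norm{\frac1r S_0u}$ on $\supp\subset(0,\varepsilon)$. Hence $\cl\Brue$ has a left and right inverse modulo operators of small norm plus compacts, which makes it Fredholm, using the adjoint, which is again of cone type with $-S_0$, for the cokernel.

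The main obstacle is the sharp constant bookkeeping in the first two steps, namely matching the Hardy estimate against \ref{ACOpSmallPerturbation} so that the perturbation is absorbed. Proving the domain identity in (a), and not merely an inclusion, is also delicate.
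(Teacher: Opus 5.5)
Your treatment of (a)–(c) is a workable alternative to the framework the paper recalls from \cite{CHSS}, which is built on the explicit solution operator $\PPP$ and a Neumann series. The three ingredients are a Hardy-type a priori estimate on the core $C_\cc^\infty((0,\vartheta),\dom{S_0})$, absorption of $\tfrac1r\coneLinkOp_1$, and density via cutoffs and spectral truncation, and together they do give (a)–(c). There is a sign slip, though. Your own integration by parts gives the cross term $+s\norm{v/r}^2$, so $\norm{(\partial_r+\tfrac sr)v}^2=\norm{v'}^2+(s^2+s)\norm{v/r}^2\geq (s+\tfrac12)^2\norm{v/r}^2$ for every $s$. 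Hardy is therefore needed on the part $s<-\tfrac12$, not $s>\tfrac12$. The sharp model bound is $\norm{\tfrac1r S_0u}\leq \sup_{s}\abs{\tfrac{2s}{2s+1}}\,\norm{(\partial_r+\tfrac1r S_0)u}$, not the one with $\abs{\tfrac{2s}{2s-1}}$. Your absorption step therefore requires $\norm{\cl S_1\cl S_0^{-1}}_{L^\infty}\sup_s\abs{\tfrac{2s}{2s+1}}<1$. This is the same smallness that produces the first bound in \eqref{eq:OpNormsS1P}, so the constant bookkeeping has to be redone with this quantity.

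The genuine gap is the cokernel in (d). Part (b) only yields a left estimate near the tip, $\norm{u}\leq C\varepsilon\norm{\cl\Brue u}$ for $u$ supported in $r<\varepsilon$. Fredholmness also needs $\ker\Brue^*$ to be finite-dimensional. Here $\Brue^*$ is the Hilbert space adjoint, which near the tip is the formal adjoint $-\partial_r+\tfrac1r(S_0+S_1^*)$ on its \emph{maximal} domain. Applying (a)–(c) to the formal adjoint (link $-S_0$) only describes its minimal closure. Your Hardy estimate, proven on the core, therefore says nothing about elements of $\dom{\Brue^*}$ unless you first show that the maximal and minimal domains agree near $r=0$. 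That identification cannot come from a priori estimates alone. The bound $\norm{(-\partial_r+\tfrac sr)v}\geq\abs{s-\tfrac12}\norm{v/r}$ holds on $C_\cc^\infty$ for all $s\neq\tfrac12$, including $\abs{s}<\tfrac12$. For such $s$, take $\cl S_0\xi=s\xi$ and $\chi$ a cutoff equal to $1$ near $0$. Then $\chi(r)r^s\xi$ lies in the maximal domain of $-\partial_r+\tfrac1r\cl S_0$ but not in its minimal domain.

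What is missing is a right inverse of $\cl\Brue$ near the tip with values in $\dom{\cl\Brue}$. In the paper's framework this is $\psi\PPP\bigl(1+\left\{\tfrac1r\coneLinkOp_1\PPP\right\}\bigr)^{-1}$, by \eqref{eq:rightConeParametrix} and \eqref{eq:OpNormsS1P}. Alternatively, if you go through the adjoint, you need a proof that $\psi\cdot\dom{\Brue^*}$ has minimal-domain regularity. This is what \cref{LeftConeAdjointParametrix} and \cref{RightConeAdjointParametrix} achieve with $\PPP^*$ in the symmetric case. Either way you need an explicit solution operator for the radial equation, not just an energy estimate.
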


In this section we extend the cone analysis with the following results.

\begin{theorem}\label{AbstractConeResults}
	Let $\Brue$ be an abstract cone operator. 
	\begin{enumerate}[label = \textup{(\alph*)}]
		\item If $\Brue$ is a symmetric operator such that, for some $\varphi\in C_{\cc,0}^\infty([0,\vartheta),\R)$, the domain of the adjoint satisfies $(1-\varphi)\cdot \dom{\Brue^*} \subset \dom{\cl\Brue}$, then $\cl\Brue$ is self-adjoint. \label{AbstractConeOpSelfAdjoint}
		\item If the inclusion $\dom{\cl S_0}\hookrightarrow \LL$ is compact and if there is some $\varphi\in C_{\cc,0}^\infty([0,\vartheta), \R)$ such that the inclusion $(1-\varphi)\cdot \dom{\cl\Brue}\hookrightarrow \EE$ is compact, then the inclusion $\dom{\cl\Brue}\hookrightarrow \EE$ is compact. \label{AbstracConeOpCompactIncl}
	\end{enumerate}
\end{theorem}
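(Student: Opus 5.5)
For both parts the plan is to cut off the cone region with $\varphi$ and $\psi$ as in the statement, and to handle the cone part with \cref{AbstractConePrevResults}.

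\textbf{Part (a).} Since $\Brue$ is symmetric, $\cl\Brue\subset\Brue^*$, so it suffices to show $\dom{\Brue^*}\subset\dom{\cl\Brue}$. Take $u\in\dom{\Brue^*}$ and split $u=(1-\varphi)u+\varphi u$.

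The first summand lies in $\dom{\cl\Brue}$ by hypothesis. For the second, the first task is to show $\varphi u\in\dom{\Brue^*}$ and that $\varphi u$ is supported in the cone. This uses (AC0) and (AC2): we check that $\varphi\cdot\dom{\Brue}\subset\dom{\Brue}$, and that the commutator $[\Brue,\varphi]$ acts on $C_\cc^\infty((0,\vartheta),\dom{S_0})$ as multiplication by $\varphi'$, a bounded operator. Together these show that multiplication by $\varphi$ preserves $\dom{\Brue^*}$.

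We are then reduced to a statement purely on $L^2((0,\vartheta),\LL)$. Let $v=\varphi u$. Since $v$ vanishes near $r=\vartheta$ and lies in $\dom{\Brue^*}$, testing against $C_\cc^\infty((0,\vartheta),\dom{S_0})$ shows that $v\in\dom{\Brue_{\rm cone}^*}$. By (AC3), $\Brue_{\rm cone}^*$ acts distributionally as $-\partial_r+\tfrac1r(S_0+S_1(r)^*)$.

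The key step, and the main obstacle, is a minimal-equals-maximal statement near $r=0$: every $v\in\dom{\Brue_{\rm cone}^*}$ with support bounded away from $\vartheta$ lies in $\dom{\cl\Brue_{\rm cone}}$. I would first treat the model case $S_1=0$ by expanding in the spectral decomposition of $\cl S_0$. On the eigenspace with eigenvalue $s$ the equation becomes the ODE $v'-\tfrac sr v=f$ with $f\in L^2$. Its homogeneous solutions $r^{s}$ are in $L^2$ near $0$ only if $s>-\tfrac12$. The spectral gap (AC4) forces either $s>\tfrac12$ or $s<-\tfrac12$, and in both cases $r^{s}$ is either not in $L^2$ near $0$ or vanishes fast enough to lie in the minimal domain. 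Hardy's inequality (with constant $|s\pm\tfrac12|^{-1}$) then yields the $H^1_{\rm cone}$-estimate uniformly in $s$.

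I would then treat $S_1$ as a perturbation, using the bounds in (AC5). These are exactly the Hardy constants, so a Neumann-series / Kato-type argument should transfer the estimate to the perturbed operator. This mirrors the proof of \cref{AbstractConePrevResults}(b), and I would reuse that estimate directly if possible. Finally, \cref{AbstractConePrevResults}(a) identifies the resulting domain with $\dom{\cl\Brue_{\rm cone}}$, and (AC1) gives density, so $\varphi u\in\dom{\cl\Brue}$.

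\textbf{Part (b).} Let $(u_k)$ be bounded in the graph norm of $\cl\Brue$. Choose $\psi\in C_{\cc,0}^\infty$ with $\psi\varphi=\varphi$ and split $u_k=(1-\varphi)u_k+\varphi u_k$.

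The first summand is bounded in the graph norm, since multiplication by $\varphi$ is bounded in the $H^1$-norm of \cref{AbstractConePrevResults}(c). It therefore has a convergent subsequence by hypothesis.

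For $\varphi u_k$, \cref{AbstractConePrevResults}(b) gives boundedness in $H^1_{\rm cone}$. Fix $\delta>0$. On $(0,\delta)$ the gap (AC4) gives $\|\tfrac1r\cl S_0 w\|\ge \tfrac1{2\delta}\|w\|$, so
\begin{equation*}
	\norm{\varphi u_k}_{L^2((0,\delta),\LL)}\le 2\delta\,C .
\end{equation*}
On $[\delta,\vartheta)$ the sequence is bounded in $H^1((\delta,\vartheta),\LL)\cap L^2((\delta,\vartheta),\dom{\cl S_0})$. By the compactness of $\dom{\cl S_0}\hookrightarrow\LL$ and an Aubin--Lions-type lemma, this embeds compactly into $L^2$.

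Taking $\delta=1/m$ and a diagonal subsequence shows that $(\varphi u_k)$ is Cauchy in $L^2$. Hence $\dom{\cl\Brue}\hookrightarrow\EE$ is compact.
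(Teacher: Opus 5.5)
\textbf{Part (a).} Your model case $S_1=0$ in part (a) is fine, but part (a) has a genuine gap exactly at the step you flag: passing from the model operator to the perturbed one. An element $v=\varphi u\in\dom{\Brue^*}$ only satisfies $-\partial_r v+\tfrac1r(S_0+S_1^*)v\in L^2$ in the weak sense. You know neither $\tfrac1r S_0 v\in L^2$ nor $\tfrac1r S_1^* v\in L^2$. So you cannot move the perturbation to the right-hand side and apply your ODE analysis.

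Reusing the estimate of \cref{AbstractConePrevResults}(b) is circular. It is an a priori bound on $\dom{\cl\Brue_{\rm cone}}$, so it presupposes the regularity you are trying to prove. Likewise, a Kato--Rellich relative-boundedness argument identifies the domain of the \emph{closure}; this is essentially \cref{AbstractConePrevResults}(a). It says nothing about the maximal domain of the adjoint, because a priori estimates on the minimal domain cannot by themselves show that elements of the maximal domain are regular. The words ``should transfer'' are exactly where the content of (a) lies.

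The missing idea is the duality argument of \cref{LeftConeAdjointParametrix}: one runs the Neumann series on the parametrix of $\Brue$, not on $v$. Pair $v$ with $\cl\Brue(\psi\PPP u)=\psi u+\psi\{\tfrac1r\coneLinkOp_1\PPP\}u+\psi'\PPP u$ and iterate. Since $\norm{\{\tfrac1r\coneLinkOp_1\PPP\}}\le\tfrac12$ and $\norm{\{\PPP\tfrac1r\coneLinkOp_1\}}\le\tfrac12$ (this is where (AC5) enters), the series converges. It yields the representation $\psi v=\PPP^*V(\psi\Brue^*-\psi')v$ with $V$ bounded, and $\tfrac1r S_1^*$ is never applied to $v$.

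Then \cref{RightConeAdjointParametrix} finishes (a): $\psi\PPP^*$ maps $L^2$ boundedly into $\dom{\cl\Brue}$. This is the rigorous form of your model-case computation, with $\PPP^*$ the componentwise solution operator of $-\partial_r+\tfrac1r S_0$. Your preliminary step showing $\varphi u\in\dom{\Brue^*}$ is correct, but it becomes unnecessary in this approach.

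\textbf{Part (b).} Your part (b) is correct, but it takes a different route from the paper. You combine a gap-based tail estimate $\norm{w}_{L^2((0,\delta),\LL)}\le 2\delta\norm{\tfrac1r\cl S_0 w}$ with Aubin--Lions on $(\delta,\vartheta)$ and a diagonal argument. The paper instead proves directly that $H^1_0((0,\vartheta),\LL)\cap L^2((0,\vartheta),\dom{S})\hookrightarrow L^2((0,\vartheta),\LL)$ is compact, via finite-rank approximation of $\dom{S}\hookrightarrow\LL$ plus Rellich. It then uses $\tfrac1r\ge\tfrac1\vartheta$, so no excision near $r=0$ is needed. Aubin--Lions would in fact also apply on all of $(0,\vartheta)$, making your tail estimate optional.
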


\begin{remark}
	The spectral gap condition in \ref{ACOpSpectralGap} is sometimes referred to as the \emph{geometric Witt condition} and plays a key role in the proof of \cref{AbstractConeResults} \ref{AbstractConeOpSelfAdjoint}. 
	Without this condition, the analysis of cone operators gets much more delicate (compare \cites{Bruening90, BrueningSeeley87}). 
\end{remark}


\subsection{Self-adjoint cone operators}

Let us recall some notation and basic results from \cite{CHSS}*{Section 2}.
Due to \cite{CHSS}*{Lemma 2.10}, we can assume without loss of generality that $S_0$ is self-adjoint.
For $s>-\frac12$, we define the integral operator $\PP_{0,s}\in \boundedOp{L^2(0,1),\C}$ by
\begin{equation*}
	\PP_{0,s}u(r) := \int_0^r \left(\tfrac yr\right)^s u(y)\,dy.
\end{equation*}
Similarly, for $s<\frac12$, let $\PP_{1,s}\in \boundedOp{L^2(0,1),\C}$ be the integral operator 
\begin{equation*}
	\PP_{1,s}u(r) := \int_1^r \left(\tfrac yr\right)^s u(y)\,dy.
\end{equation*}
Using the direct integral decomposition, we write
\begin{equation*}
	L^2((0,1),\LL) = \int_{\spec(S_0)} L^2((0,1),\LL_s)\,d\mu(s),
\end{equation*}
and for $u(s)\in L^2((0,1),\LL_s)$, we define
\begin{equation*}
	\PPP u(s) := \begin{cases}
		\PP_{0,s}u(s), & \text{if } s >\tfrac12, \\
		\PP_{1,s}u(s), & \text{if } s<-\tfrac12.
	\end{cases}
\end{equation*}
This yields a measurable family $\PPP(u)\in (L^2((0,1),\LL_s))_{s\in \spec(S_0)}$, which defines a bounded operator $\PPP\in \boundedOp{L^2((0,1),\LL)}$.
Moreover, the compositions $\frac1r \coneLinkOp_1\PPP$ and $\PPP \tfrac1r \coneLinkOp_1$ are well-defined and extend to bounded operators on $L^2((0,1),\LL)$.
We denote these extensions by $\left\{ \tfrac1r \coneLinkOp_1\PPP \right\}$ and $\left\{ \PPP \tfrac1r \coneLinkOp_1 \right\}$, respectively.
They satisfy the operator norm bounds
\begin{equation}\label{eq:OpNormsS1P}
	\norm{\left\{ \tfrac1r \coneLinkOp_1\PPP \right\}}_{\boundedOp{L^2((0,1),\LL)}} \leq \tfrac12\qquad\text{and} \qquad \norm{\left\{ \PPP \tfrac1r \coneLinkOp_1 \right\}}_{\boundedOp{L^2((0,1),\LL)}} \leq \tfrac12.
\end{equation}

The operator $\PPP$ is an almost right parametrix for $\Brue$ near the cone tip in the sense that, for every $\psi\in C_\cc^\infty([0,\vartheta),\R)$ and every $u\in L^2((0,\vartheta),\LL)$, we have
\begin{equation}\label{eq:rightConeParametrix}
	\cl \Brue\left(\psi\cdot \PPP\right) u = \psi\cdot u + \psi \cdot \left\{ \tfrac1r \coneLinkOp_1\PPP \right\} u + \psi'\cdot \PPP u.
\end{equation}

\begin{lemma}\label{LeftConeAdjointParametrix}
	There exists a bounded operator $V\in \boundedOp{L^2((0,\vartheta),\LL)}$ such that, for every $v\in \dom{\Brue^*}$ and every $\psi\in C_\cc^\infty([0,\vartheta),\R)$, we have
	\begin{equation*}
		\psi \cdot v = \PPP^*V(\psi\Brue^* - \psi')v.
	\end{equation*}
\end{lemma}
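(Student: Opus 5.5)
The plan is to dualize the right parametrix identity \eqref{eq:rightConeParametrix}. Fix $v\in\dom{\Brue^*}$ and $\psi\in C_\cc^\infty([0,\vartheta),\R)$. Since $\psi v$ lives in the cone part, we test it against arbitrary $u\in L^2((0,\vartheta),\LL)$. The starting point is the identity
\begin{equation*}
	\Ltwoinnerprod{\cl\Brue(\psi\PPP)u}{v} = \Ltwoinnerprod{\psi u}{v} + \Ltwoinnerprod{\psi\left\{\tfrac1r\coneLinkOp_1\PPP\right\}u}{v} + \Ltwoinnerprod{\psi'\PPP u}{v}.
\end{equation*}
The left-hand side equals $\Ltwoinnerprod{\psi\PPP u}{\cl\Brue^*v}$, because $\psi\PPP u\in\dom{\cl\Brue}$, as implicit in \eqref{eq:rightConeParametrix}, and $\Brue^*=(\cl\Brue)^*$. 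Using that $\psi$ is real-valued and moving all operators to the right, we obtain
\begin{equation*}
	\Ltwoinnerprod{u}{\PPP^*\psi\Brue^*v} = \Ltwoinnerprod{u}{\psi v} + \Ltwoinnerprod{u}{\left\{\tfrac1r\coneLinkOp_1\PPP\right\}^*\psi v} + \Ltwoinnerprod{u}{\PPP^*\psi' v}.
\end{equation*}
Since $u$ is arbitrary, this gives
\begin{equation*}
	\left(1+T\right)\psi v = \PPP^*(\psi\Brue^* - \psi')v, \qquad T:=\left\{\tfrac1r\coneLinkOp_1\PPP\right\}^*.
\end{equation*}

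The next step is to invert $1+T$. By \eqref{eq:OpNormsS1P}, $\norm{T}\le\frac12$, so $1+T$ is invertible by a Neumann series. This yields
\begin{equation*}
	\psi v=(1+T)^{-1}\PPP^*(\psi\Brue^*-\psi')v.
\end{equation*}
To reach the asserted form $\PPP^*V(\cdots)$, the order must be swapped. For this we use the intertwining relation
\begin{equation*}
	\left\{\tfrac1r\coneLinkOp_1\PPP\right\}^*\PPP^* = \PPP^*\left\{\PPP\tfrac1r\coneLinkOp_1\right\}^*,
\end{equation*}
which formally reads $\PPP^*(\tfrac1r\coneLinkOp_1)^*\PPP^*$ on both sides. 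It should be verified on a dense set, for instance on $u$ with $\PPP u$ and $\tfrac1r\coneLinkOp_1\PPP u$ nice, and then extended by boundedness.

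Setting $W:=\left\{\PPP\tfrac1r\coneLinkOp_1\right\}^*$, which also has norm at most $\frac12$ by \eqref{eq:OpNormsS1P}, we get $(1+T)\PPP^*=\PPP^*(1+W)$. Hence $(1+T)^{-1}\PPP^*=\PPP^*(1+W)^{-1}$, and $V:=(1+W)^{-1}$ is bounded and independent of $v$ and $\psi$.

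I expect the main obstacle to be justifying the intertwining identity rigorously, since $\tfrac1r\coneLinkOp_1$ is unbounded and the braces denote extensions. The approach is to check $\left\{\tfrac1r\coneLinkOp_1\PPP\right\}\PPP=\PPP\left\{\PPP\tfrac1r\coneLinkOp_1\right\}$ on the dense subspace $C_\cc^\infty((0,\vartheta),\dom{S_0})$, where both sides equal $\PPP\tfrac1r\coneLinkOp_1\PPP$. This requires that $\PPP$ map such functions into the domain where $\tfrac1r\coneLinkOp_1$ acts, which the boundedness statements of \cite{CHSS} provide. The identity then extends by continuity, and taking adjoints gives the required relation.

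A secondary check is the domain claim $\psi\PPP u\in\dom{\cl\Brue}$ for all $u$, which is needed in the first step. If \eqref{eq:rightConeParametrix} is only known on a dense set, the identity in the first step is derived there first and then extended by boundedness of all the operators involved.
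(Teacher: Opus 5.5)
Your proposal is correct and follows essentially the paper's argument: the paper also pairs \eqref{eq:rightConeParametrix} with $v$ and sums the Neumann series in $A=\left\{\PPP\tfrac1r\coneLinkOp_1\right\}$ and $B=\left\{\tfrac1r\coneLinkOp_1\PPP\right\}$, using $\norm{A},\norm{B}\le\tfrac12$. Its ``iterating'' step tacitly uses the same intertwining $\PPP B=A\PPP$ that you make explicit, so both arguments arrive at $V=(1+A^*)^{-1}$. There is one slip: the identity to check on the dense subspace is $\PPP\left\{\tfrac1r\coneLinkOp_1\PPP\right\}=\left\{\PPP\tfrac1r\coneLinkOp_1\right\}\PPP$, which is the adjoint of your displayed relation, not the transposed version you wrote.
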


\begin{proof}

	Let $u\in L^2((0,1),\LL)$ and set
	\begin{align*}
		A:= \left\{ \PPP\tfrac1r \coneLinkOp_1 \right\}, \\
		B:= \left\{ \tfrac1r \coneLinkOp_1\PPP \right\}.
	\end{align*}
	Iterating \eqref{eq:rightConeParametrix}, we obtain
	\begin{equation*}
		\Ltwoinnerprod{\psi u}{v} = \sum_{j=0}^k\Ltwoinnerprod{(-1)^j A^j \PPP u}{(\psi \Brue^* -\psi' )v} + \Ltwoinnerprod{(-1)^{k+1}B^{k+1}u}{v}.
	\end{equation*}

	Since for the $L^2$-norms we have $\norm{A} \leq \frac12$ and $\norm{B}\leq \frac12$ by \eqref{eq:OpNormsS1P}, the series $\sum_{j=0}^k (-1)^j A^j$ converges in operator norm to a bounded operator $\widetilde V\in \boundedOp{L^2((0,1),\LL)}$ and the second term converges in operator norm to $0$ as $k\to \infty$.
	Consequently, this yields
	\begin{equation*}
		\Ltwoinnerprod{ u}{\psi \cdot v} = \Ltwoinnerprod{u}{\PPP^*V(\psi B^* - \psi')v}
	\end{equation*}
	with $V = \widetilde V^*$.
	This concludes the proof since $u$ was arbitrary.
\end{proof}

Observe that $\PP_{0,s}^* = -\PP_{1,-s}$ for $s>-\frac12$ and $\PP_{1,s}^* = -\PP_{0,-s}$ for $s<\frac12$ as operators on $L^2((0,1),\C)$.
Thus, for $u(s) \in L^2((0,1),\LL) = (L^2((0,1),\LL_s))_{s\in \spec(S_0)}$, it follows
\begin{equation*}
	\PPP^* u(s) = \begin{cases}
		-\PP_{1,-s}u(s), & \text{if } s >\tfrac12, \\
		-\PP_{0,-s}u(s), & \text{if } s<-\tfrac12.
	\end{cases}
\end{equation*}

For every $u\in C_\cc^0((0,1),\C)\otimes \dom{S_0}$, we have $\PPP^* u \in C^1((0,1),\dom{S_0})$ and 
	\begin{equation*}
		(\partial_r -\tfrac1r \coneLinkOp_0)\PPP^* u = -u,
	\end{equation*}
compare \cite{CHSS}*{Proposition 2.18}.

Furthermore, the operators
	\begin{equation*}
		\tfrac1r \coneLinkOp_0\PPP^* \colon C_\cc^0((0,1),\C)\otimes \dom{S_0} \to L^2((0,1),\LL)
	\end{equation*}
	and
	\begin{equation*}
		\tfrac1r \coneLinkOp_1 \PPP^* \colon C_\cc^0((0,1),\C)\otimes \dom{S_0} \to L^2((0,1),\LL)
	\end{equation*}
	extend to bounded operators $\left\{ \frac1r\coneLinkOp_0\PPP^* \right\}$ and $\left\{ \frac1r\coneLinkOp_1\PPP^* \right\}$ on $L^2((0,1),\LL)$.
	Their operator norms satisfy
	\begin{equation*}
		\norm{\left\{ \tfrac1r \coneLinkOp_0 \PPP^* \right\}}_{\boundedOp{L^2((0,1),\LL)}} \leq \sup_{s\in \spec(S_0)} \frac{\abs{s}}{\abs{s-\tfrac12}}
	\end{equation*}
	and
	\begin{equation*}
		\norm{\left\{ \tfrac1r \coneLinkOp_1 \PPP^* \right\}}_{\boundedOp{L^2((0,1),\LL)}} \leq \norm{S_0^{-1}S_1}_{L^\infty((0,\vartheta), \boundedOp{\LL})}\cdot \sup_{s\in \spec(S_0)} \frac{\abs{s}}{\abs{s-\tfrac12}}\ .
	\end{equation*}

	This leads to the following analog of \eqref{eq:rightConeParametrix} for $\PPP^*$. Its proof is similar to \cite{CHSS}*{Proposition 2.26}.

\begin{proposition}\label{RightConeAdjointParametrix}
	For every $\psi \in C_\cc^\infty([0,\vartheta), \R)$ and every $u\in L^2((0,\vartheta),\LL)$, we have $\psi\cdot \PPP^*u\in \dom{\cl\Brue}$ and
	\begin{equation*}
		\cl\Brue \left(\psi\cdot  \PPP^*\right) u = -\psi\cdot u + 2\psi\cdot \left\{ \tfrac1r \coneLinkOp_0\PPP^* \right\} + \psi \cdot \left\{ \tfrac1r \coneLinkOp_1\PPP^* \right\} + \psi'\cdot \PPP^* u.
	\end{equation*}

	In particular, the map $u\mapsto \psi\cdot \PPP^* u$ defines a bounded operator $L^2((0,\vartheta),\LL)\to \dom{\cl\Brue}$.
\end{proposition}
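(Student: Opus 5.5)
We prove the identity first on a dense class and then extend it by continuity, using closedness of $\cl\Brue$. Concretely, take $u\in C_\cc^0((0,\vartheta),\C)\otimes\dom{S_0}$; this is dense in $L^2((0,\vartheta),\LL)$. For such $u$ we know from the facts recalled above (compare \cite{CHSS}*{Proposition 2.18}) that $\PPP^*u\in C^1((0,\vartheta),\dom{S_0})$ and that $(\partial_r-\tfrac1r\coneLinkOp_0)\PPP^*u=-u$. Moreover, $\PPP^*u$ vanishes near $r=0$ in the components with $s<-\tfrac12$, where $\PPP^*=-\PP_{0,-s}$ integrates from $0$. In the components with $s>\tfrac12$ it equals $-\int_1^r (y/r)^{-s}u(y)\,dy$, which is $O(r^s)$ near $0$ and so decays there.

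The first step is to show $\psi\cdot\PPP^*u\in\dom{\cl\Brue}$ for such $u$. Since $\psi$ has compact support in $[0,\vartheta)$, the section $\psi\PPP^*u$ is $C^1$ with values in $\dom{S_0}$ and is supported in $[0,a]$ with $a<\vartheta$. It need not vanish near $0$, so we cut it off: let $\chi_\varepsilon(r)=\chi(r/\varepsilon)$ with $\chi=0$ near $0$ and $\chi=1$ on $[1,\infty)$. Then $\chi_\varepsilon\psi\PPP^*u$ can be approximated, after mollification in $r$, by elements of $C_\cc^\infty((0,\vartheta),\dom{S_0})\subset\dom{\Brue}$, and (AC3) gives
\begin{equation*}
\Brue(\chi_\varepsilon\psi\PPP^*u)=\chi_\varepsilon\psi\bigl(\partial_r+\tfrac1r\coneLinkOp_0+\tfrac1r\coneLinkOp_1\bigr)\PPP^*u+(\chi_\varepsilon\psi)'\PPP^*u .
\end{equation*}
Substituting $\partial_r\PPP^*u=-u+\tfrac1r\coneLinkOp_0\PPP^*u$ yields exactly the claimed formula with $\chi_\varepsilon\psi$ in place of $\psi$, plus the error term $\chi_\varepsilon'\psi\PPP^*u$.

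The main obstacle is showing that $\chi_\varepsilon'\psi\PPP^*u\to0$ in $L^2$ as $\varepsilon\to0$. Since $|\chi_\varepsilon'|\lesssim\varepsilon^{-1}\mathbf 1_{[\varepsilon,2\varepsilon]}$, this requires $\|\PPP^*u\|_{L^2((\varepsilon,2\varepsilon),\LL)}=o(\varepsilon)$, which amounts to a pointwise bound $\|\PPP^*u(r)\|=o(r^{1/2})$. I would obtain this componentwise from the spectral gap \ref{ACOpSpectralGap}. For $s>\tfrac12$ and $u$ supported away from $0$, we have $|\PPP^*u(s)(r)|\le C r^{s}\le Cr^{1/2+\delta}$ uniformly for small $r$, with $C$ controlled by $\|u\|$ through Cauchy–Schwarz. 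For $s<-\tfrac12$ the component vanishes near $0$. If uniformity over the direct integral is delicate, I would alternatively use the norm identity $\|\tfrac1r\coneLinkOp_0\PPP^*u\|\lesssim\|u\|$ together with the gap, which gives $\tfrac1r\PPP^*u\in L^2$; then $\varepsilon^{-1}\|\PPP^*u\|_{L^2(\varepsilon,2\varepsilon)}\le 2\|\tfrac1r\PPP^*u\|_{L^2(\varepsilon,2\varepsilon)}\to0$ by dominated convergence. This second route is cleaner and is the one I would use. With the error gone, $\chi_\varepsilon\psi\PPP^*u\to\psi\PPP^*u$ in $L^2$ and the images converge, so closedness of $\cl\Brue$ gives membership and the formula for dense $u$.

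Finally we extend to all $u\in L^2$. The right-hand side is bounded in $u$: the operators $\{\tfrac1r\coneLinkOp_0\PPP^*\}$ and $\{\tfrac1r\coneLinkOp_1\PPP^*\}$ are bounded by the norm estimates recalled above, and $\PPP^*$ is bounded. Taking $u_k\to u$ from the dense class, $\psi\PPP^*u_k\to\psi\PPP^*u$ and $\cl\Brue(\psi\PPP^*u_k)$ converges in $L^2$. Closedness then gives $\psi\PPP^*u\in\dom{\cl\Brue}$ together with the formula. The same estimate bounds the graph norm of $\psi\PPP^*u$ by a constant times $\|u\|$, which proves the boundedness statement $L^2((0,\vartheta),\LL)\to\dom{\cl\Brue}$.
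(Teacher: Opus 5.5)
Your proof is correct and takes essentially the route the paper intends; the paper gives no proof of its own and only refers to the proof of \cite{CHSS}*{Proposition 2.26}. That route establishes the identity for $u\in C_\cc^0((0,\vartheta),\C)\otimes\dom{S_0}$ from $(\partial_r-\tfrac1r\coneLinkOp_0)\PPP^*u=-u$ and (AC3), then passes to all $u\in L^2$ using the boundedness of $\PPP^*$, $\{\tfrac1r\coneLinkOp_0\PPP^*\}$ and $\{\tfrac1r\coneLinkOp_1\PPP^*\}$ together with the closedness of $\cl\Brue$. Your treatment of the tip via $\chi_\varepsilon$ and the estimate $\varepsilon^{-1}\|\PPP^*u\|_{L^2(\varepsilon,2\varepsilon)}\le 2\|\tfrac1r\PPP^*u\|_{L^2(\varepsilon,2\varepsilon)}\to0$, which follows from $\|\cl S_0^{-1}\|\le 2$, is sound. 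The only slip is in your discarded first route: the constant in $Cr^s$ grows like $a^{-s}$ and so depends on $s$; it would become uniform if you bound by $(r/a)^s\le(r/a)^{1/2+\delta}$ instead, and you avoid the issue anyway by using the second route.
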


\begin{proof}[Proof of \cref{AbstractConeResults} \ref{AbstractConeOpSelfAdjoint}]
	The inclusion $\dom{\cl\Brue}\subset \dom{\Brue^*}$ is automatic.
	Thus, we need to show the reverse inclusion $\dom{\Brue^*}\subset \dom{\cl\Brue}$.

	Let $v\in \dom{\Brue^*}$.
	By assumption, $(1-\varphi)\cdot v \in \dom{\cl\Brue}$ for some $\varphi \in C_{\cc,0}^\infty([0,\vartheta),\R)$.
	It remains to prove $\varphi v \in \dom{\cl\Brue}$.
	Choose a cutoff function $\psi\in C_\cc^\infty([0,\vartheta), \R)$ such that $\psi \varphi = \varphi$.
	\cref{LeftConeAdjointParametrix} yields
	\begin{equation*}
		\varphi\cdot v = \varphi\cdot \PPP^*V(\psi\Brue^* - \psi')v
	\end{equation*}
	with $V \in \boundedOp{L^2((0,\vartheta),\LL)}$.
	\cref{RightConeAdjointParametrix} yields $\varphi\cdot  v \in \dom{\cl\Brue}$.
\end{proof}


\subsection{Compact inclusion of the domain}

Let $\LL$ be a separable Hilbert space and let $S \colon \LL\subset \dom{S} \to \LL$ be a self-adjoint operator. 
Furthermore, for $a<b$, let $H_0^1((a,b),\LL)$ be the domain of the minimal closed extension $\cl \partial_r$ of the operator
\begin{equation*}
	\partial_r \colon L^2((a,b),\LL)\supset C_\cc^\infty((a,b),\LL) \to L^2((a,b),\LL).
\end{equation*}

We define the operator  
\begin{equation*}
	\mathscr S \colon L^2((a,b),\LL)\supset C_\cc^\infty((a,b),\dom{S}) \to L^2((a,b),\LL), \quad \mathscr S u(r) := S u(r)
\end{equation*}
whose minimal closed extension satisfies
\begin{equation*}
	\dom{\cl{\mathscr S}} = L^2((a,b),\dom{S}).
\end{equation*}

Consider the Hilbert space
\begin{equation*}
	\HH:= H^1_0((a,b),\LL)\cap L^2((a,b),\dom{S})
\end{equation*}
equipped with the scalar product
\begin{equation*}\label{AbstractConeOpSobolevScalarProd}
	\innerprod{u}{v}_{\HH}:= \Ltwoinnerprod{\cl\partial_r u}{\cl\partial_r v} + \Ltwoinnerprod{\cl{\mathscr S} u}{\cl{\mathscr S} v} + \Ltwoinnerprod{u}{v}.
\end{equation*}

\begin{lemma}\label{CompactInclusionCylinderOp}
	Assume that the inclusion $\iota\colon \dom{S}\to \LL$ is compact. 
	Then the inclusion
	\begin{equation*}
		\tau\colon \HH \hookrightarrow L^2((a,b),\LL)
	\end{equation*}
	is also compact.  
\end{lemma}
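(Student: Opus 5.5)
We prove the lemma with a vector-valued Rellich argument. We approximate $\tau$ in operator norm by compact operators, using two ingredients: the spectral decomposition of $S$ to truncate to a finite-dimensional part of $\LL$, and classical Rellich compactness on the bounded interval $(a,b)$ for that finite-dimensional part.

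\emph{Discreteness of $S$.} Since $S$ is self-adjoint and $\iota\colon \dom{S}\to\LL$ is compact, the resolvent $(S+i)^{-1} = \iota\circ (S+i)^{-1}$ is compact on $\LL$. Hence $S$ has discrete spectrum with finite multiplicities. Choose an orthonormal eigenbasis $(e_k)$ of $\LL$ with $Se_k=\lambda_k e_k$ and $\abs{\lambda_k}\to\infty$.

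\emph{Truncation.} For $\Lambda>0$ let $P_\Lambda$ be the spectral projection of $S$ onto $\spec(S)\cap[-\Lambda,\Lambda]$. It has finite rank and commutes with $S$. Applied pointwise in $r$, it acts on $L^2((a,b),\LL)$ and commutes with $\cl\partial_r$, since it preserves $C_\cc^\infty((a,b),\LL)$ and is bounded. Therefore $P_\Lambda$ maps $\HH$ into
\begin{equation*}
	H^1_0\bigl((a,b),\operatorname{ran}P_\Lambda\bigr)\cong H^1_0((a,b),\C)^{m}, \qquad m=\operatorname{rank}P_\Lambda,
\end{equation*}
with norm bounded by $\norm{u}_\HH$. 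By the classical Rellich theorem on the bounded interval $(a,b)$, the map $\tau_\Lambda := \tau\circ P_\Lambda$ is compact from $\HH$ to $L^2((a,b),\LL)$.

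\emph{The tail.} For $u\in\HH$ we have, pointwise a.e.\ in $r$,
\begin{equation*}
	\norm{(1-P_\Lambda)u(r)}_{\LL}\le \Lambda^{-1}\norm{Su(r)}_{\LL}.
\end{equation*}
Integrating gives
\begin{equation*}
	\norm{(\tau-\tau_\Lambda)u}_{L^2}\le \Lambda^{-1}\norm{\cl{\mathscr S}u}_{L^2}\le\Lambda^{-1}\norm{u}_\HH.
\end{equation*}
Thus $\tau_\Lambda\to\tau$ in operator norm as $\Lambda\to\infty$, and $\tau$ is compact as a norm limit of compact operators.

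The only mildly delicate step is justifying that $P_\Lambda$ commutes with $\cl\partial_r$ and $\cl{\mathscr S}$ on $\HH$. We would check this on the dense subspace $C_\cc^\infty((a,b),\dom{S})$, using the identification $\dom{\cl{\mathscr S}}=L^2((a,b),\dom{S})$, and pass to closures. The remainder of the argument is routine.
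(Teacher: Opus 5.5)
Your proof is correct, but it takes a different route from the paper. The paper approximates the compact inclusion $\iota\colon \dom{S}\to\LL$ in norm by arbitrary finite-rank operators $\iota_n$ and applies Rellich on $H^1_0((a,b),\LL_n)$ with $\LL_n=\iota_n(\dom{S})$. It then builds a diagonal sequence of finite-rank operators converging to $\tau$, and it never uses the spectral decomposition of $S$. You instead combine self-adjointness of $S$ with compactness of $\iota$ to get a compact resolvent, and take the specific approximants $P_\Lambda$ given by spectral projections. This choice buys several things:
\begin{enumerate}[label=(\roman*)]
\item $P_\Lambda$ is bounded on all of $\LL$, so it acts on $H^1_0((a,b),\LL)$ and commutes with $\cl\partial_r$ by a one-line closure argument. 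By contrast, the paper's step $\iota_n(\HH)\subset H^1_0((a,b),\LL_n)$, and its use of $\norm{\kappa\circ\iota_n-\kappa_n}_{\boundedOp{H^1_0(\LL),L^2(\LL)}}$, tacitly need each $\iota_n$ to act on $\LL$-valued $H^1_0$-functions, e.g.\ by extending boundedly to $\LL$. This holds for suitably chosen approximants, but not for an arbitrary finite-rank approximant in $\boundedOp{\dom{S},\LL}$.
\item The spectral inequality $\norm{(1-P_\Lambda)x}\le\Lambda^{-1}\norm{Sx}$ gives the explicit rate $\norm{\tau-\tau_\Lambda}\le\Lambda^{-1}$.
\item You only need that norm limits of compact operators are compact, so no diagonal argument is required.
\end{enumerate}
The paper's formulation, in turn, avoids the spectral theorem and is phrased purely in terms of compactness of $\iota$, at the price of needing a suitable choice of approximants.

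Finally, the step you flag as delicate is lighter than you suggest. You never need $P_\Lambda$ to commute with $\cl{\mathscr S}$, only with $\cl\partial_r$. You also use the pointwise identity $\cl{\mathscr S}u(r)=Su(r)$ for a.e.\ $r$, which comes from $\dom{\cl{\mathscr S}}=L^2((a,b),\dom{S})$.
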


\begin{proof}
	We show that $\tau$ can be approximated in $\boundedOp{\HH,L^2((a,b),\LL)}$ by finite rank operators. 

	Let 
	\begin{equation*}
		\kappa \colon H^1_0((a,b),\LL)\to L^2((a,b),\LL)
	\end{equation*}
	be the inclusion. 
	The map $\iota$ induces an inclusion $L^2((a,b),\dom{S})\hookrightarrow L^2((a,b),\LL)$ which we continue to denote by $\iota$.
	We have $\tau = \kappa \circ \iota\vert_{\HH}$.

	Since $\iota$ is a compact operator between the Hilbert spaces $\dom{S}$ and $\LL$, we can approximate $\iota$ in $\boundedOp{\dom{S}, \LL}$ by a sequence of finite rank operators $(\iota_n)_{n\in \N}$. 
	Therefore, $\LL_n :=\iota_n(\dom{S})\subset \LL$ is a finite dimensional subspace and, for every $n$, we obtain induced operators
	\begin{equation*}
		\iota_n \colon L^2((a,b),\dom{S})\to L^2((a,b), \LL_n).
	\end{equation*}
	Observe that $\iota_n(\HH) \subset H_0^1((a,b), \LL_n)$ and $\kappa (\iota_n(\HH))\subset L^2((a,b), \LL_n)$.
	Due to the Rellich-Kondrachov theorem, the inclusion
	\begin{equation*}
		\kappa\vert_{H_0^1((a,b), \LL_n)}\colon H_0^1((a,b), \LL_n) \to L^2((a,b),\LL_n) 
	\end{equation*}
	is compact.
	Hence for every $n\in \N$, we can approximate $\kappa \circ \iota_n$ in $\boundedOp{\HH, L^2((a,b),\LL)}$ by a sequence $(\kappa_m^n)_{m\in \N}$ of finite rank operators. 
	We will define a sequence $(\kappa_n)_{n\in \N}$ inductively:
	Let $\kappa_1:= \kappa_1^1$.
	Suppose we have defined $\kappa_{n-1}$ for $n\in \N$. 
	Choose $m_n\in \N$ such that 
	\begin{equation*}
		\norm{\kappa\circ\iota_n - \kappa_{m_n}^n}_{\boundedOp{\HH, L^2((a,b),\LL)}}\leq \tfrac12\norm{\kappa\circ \iota_{n-1} - \kappa_{n-1}}_{\boundedOp{\HH, L^2((a,b),\LL)}}.
	\end{equation*} 
	Set $\kappa_n := \kappa_{m_n}^n$.
	Consequently, we get
	\begin{equation*}
		\norm{\kappa\circ\iota_n - \kappa_n}_{\boundedOp{\HH, L^2((a,b),\LL)}}\to 0 \qquad \text{as }n\to \infty.
	\end{equation*}

	For brevity, we will suppress the interval $(a,b)$ from the notation in the following computation.
	Let $\varepsilon>0$ and $u\in \HH$. 
	Choose $n$ large enough such that $\norm{\iota - \iota_n}_{\boundedOp{\dom{S}, \LL}} \leq \frac\varepsilon 2$ and $\norm{\kappa\circ \iota_n - \kappa_n}_{\boundedOp{H_0^1(\LL), L^2( \LL)}} \leq \frac\varepsilon 2$.
	Then it holds
	\begin{align*}
		\norm{(\kappa\circ \iota - \kappa_n)u}_{L^2(\LL)} \leq&\, \norm{\kappa(\iota - \iota_n)u}_{L^2(\LL)} + \norm{(\kappa \circ \iota_n - \kappa_n)u}_{L^2(\LL)} \\
		\leq& \, \norm{\iota - \iota_n}_{\boundedOp{\dom{S}, \LL}}\norm{u}_{L^2(\dom{S})} + \norm{\kappa\circ \iota_n - \kappa_n}_{\boundedOp{H_0^1(\LL), L^2( \LL)}}\norm{u}_{H_0^1(\LL)} \\
		\leq& \, \varepsilon \norm{u}_{\HH}.
	\end{align*}
	We deduce that $\norm{\tau - \kappa_n}_{\boundedOp{\HH, L^2((a,b), \LL)}}\to 0$ as $n\to \infty$, which concludes the proof.
\end{proof}

Now let $a = 0$, $b = \vartheta$ and consider the operator
\begin{equation*}
	\tfrac1r \mathscr S \colon L^2((0,\vartheta), \LL)\supset C_\cc^\infty((0,\vartheta), \dom{S}) \to L^2((0,\vartheta), \LL).
\end{equation*}
Observe that the domain of its minimal closed extension is given by the weighted Sobolev space
\begin{equation*}
	\dom{\cl{\tfrac1r \mathscr S}} = r\cdot L^2((0,\vartheta), \dom{S}).
\end{equation*}

Let
\begin{equation*}
	\widetilde \HH := H_0^1((0, \vartheta),\LL)\cap \dom{\cl{\tfrac1r \mathscr S}}
\end{equation*}
equipped with the scalar product
\begin{equation*}
	\innerprod{u}{v}_{\widetilde \HH}:= \Ltwoinnerprod{\cl\partial_r u}{\cl\partial_r v} + \Ltwoinnerprod{\cl{\tfrac1r \mathscr S} u}{\cl{\tfrac1r \mathscr S} v} + \Ltwoinnerprod{u}{v}.
\end{equation*}

\begin{corollary}\label{CompactInclusionConeOp}
	If the inclusion $\dom{S}\hookrightarrow \LL$ is compact, then the inclusion
	\begin{equation*}
		\widetilde \HH \hookrightarrow L^2((0,\vartheta), \LL)
	\end{equation*}
	is compact.
\end{corollary}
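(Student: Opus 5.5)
We reduce \cref{CompactInclusionConeOp} to \cref{CompactInclusionCylinderOp} by localizing away from the cone tip, and treat the part near $r=0$ by a smallness argument. The plan is to show that the unit ball of $\widetilde\HH$ is totally bounded in $L^2((0,\vartheta),\LL)$. The key observation is that on $(0,\delta)$ the factor $\tfrac1r\geq \tfrac1\delta$ makes the $L^2$-mass small. Without loss of generality $0\notin\spec(S)$, or we replace $S$ by $|S|+1$ in the norm; compactness of $\dom{S}\hookrightarrow\LL$ is unaffected. But we need control of $\norm{u}$ itself, not of $\norm{Su}$, so we simply use the Hardy-type estimate coming from $r\leq\delta$.

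Concretely, for $u\in\widetilde\HH$ and $0<\delta<\vartheta$ we estimate
\begin{equation*}
	\norm{u}_{L^2((0,\delta),\LL)}^2 = \int_0^\delta \norm{u(r)}^2\,dr \le \delta^2\int_0^\delta \tfrac1{r^2}\norm{u(r)}^2\,dr.
\end{equation*}
To bound the right-hand side by $\norm{\cl{\tfrac1r\mathscr S}u}^2$, we first add $u$ to the graph norm by working with $S'=(S^2+1)^{1/2}$: we have $\norm{S'x}^2=\norm{Sx}^2+\norm{x}^2$, and $\dom{S'}=\dom S$ with a compact inclusion. Alternatively, we use the one-dimensional Hardy inequality $\int_0^\vartheta r^{-2}\norm{u}^2\le 4\int_0^\vartheta\norm{\partial_r u}^2$ for $u\in H_0^1$, applied componentwise on $C_\cc^\infty$ and extended by density. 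Either way, $\norm{u}_{L^2((0,\delta),\LL)}\le C\delta\norm{u}_{\widetilde\HH}$ with $C$ independent of $\delta$. The Hardy route is cleanest, since $H^1_0$ is built into $\widetilde\HH$.

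Next, fix a smooth cutoff $\chi_\delta$ with $\chi_\delta=0$ on $(0,\delta/2)$, $\chi_\delta=1$ on $(\delta,\vartheta)$, and $|\chi_\delta'|\lesssim 1/\delta$. Multiplication by $\chi_\delta$ maps $\widetilde\HH$ boundedly into the space $\HH$ of \cref{CompactInclusionCylinderOp} on the interval $(a,b)=(\delta/2,\vartheta)$. To check this, note that $\partial_r(\chi_\delta u)=\chi_\delta'u+\chi_\delta\partial_r u$ is in $L^2$. Also, on $(\delta/2,\vartheta)$ we have $\norm{Su}\le\vartheta\norm{\tfrac1r Su}$, so $\chi_\delta u\in L^2(\dom S)$. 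Finally, vanishing near $\delta/2$ together with $u\in H^1_0$ near $\vartheta$ gives $\chi_\delta u\in H_0^1((\delta/2,\vartheta),\LL)$. These are first checked on $C_\cc^\infty((0,\vartheta),\dom S)$ and then passed to closures. Hence $u\mapsto\chi_\delta u$ is compact $\widetilde\HH\to L^2((0,\vartheta),\LL)$ by \cref{CompactInclusionCylinderOp}. Meanwhile
\begin{equation*}
	\norm{u-\chi_\delta u}_{L^2}\le\norm{u}_{L^2((0,\delta),\LL)}\le C\delta\norm{u}_{\widetilde\HH}.
\end{equation*}
Thus the inclusion is a norm limit, as $\delta\to0$, of compact operators, and is therefore compact.

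The main obstacle is the near-tip smallness estimate, that is, justifying the Hardy inequality for $\LL$-valued $H^1_0$ functions, or equivalently making precise that $\dom{\cl{\tfrac1r\mathscr S}}$ controls $r^{-1}u$ in $L^2$. I would try Hardy first, proving it for $C_\cc^\infty((0,\vartheta),\dom S)$, which is dense in $\widetilde\HH$ by the definition of the minimal extensions, and then extending by continuity. The remaining points, namely membership of $\chi_\delta u$ in the closed domains and the final norm-limit argument, are routine.
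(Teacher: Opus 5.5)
Your proof is correct, but it takes a detour that the paper avoids. \cref{CompactInclusionCylinderOp} is stated for an arbitrary interval $(a,b)$, so it applies directly with $(a,b)=(0,\vartheta)$, and there is no need to stay away from the tip. The pointwise estimate you use on $(\delta/2,\vartheta)$, namely $\norm{Su(r)}\le\vartheta\norm{\tfrac1r Su(r)}$, holds on all of $(0,\vartheta)$. The weight $\tfrac1r\ge\tfrac1\vartheta$ only strengthens the norm, so the inclusion $\widetilde\HH\hookrightarrow\HH$ is bounded. The paper's proof is exactly this: the inclusion $\widetilde\HH\hookrightarrow L^2((0,\vartheta),\LL)$ factors through the compact inclusion $\HH\hookrightarrow L^2((0,\vartheta),\LL)$.

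Your route is longer. You cut off near $r=0$, get smallness there from $H^1_0$ alone via Hardy (even $\norm{u(r)}\le\sqrt r\,\norm{\cl\partial_r u}_{L^2}$ suffices), and approximate by compact operators. What it buys is that it only needs control of $Su$ on intervals $(\delta/2,\vartheta)$, with constants allowed to blow up as $\delta\to0$. It would therefore still work if the $\dom{S}$-part of the norm degenerated at the tip, for example with a weight $r$ in place of $\tfrac1r$. In the present setting that generality is not needed.

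Two small remarks.

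\begin{enumerate}
\item Your first suggested alternative, replacing $S$ by $(S^2+1)^{1/2}$, is not legitimate on its own. It changes $\dom{\cl{\tfrac1r\mathscr S}}$, since it additionally demands $\tfrac1r u\in L^2$. That this is harmless on $\widetilde\HH$ follows only from Hardy, so ``either way'' should really read ``via Hardy''.
\item Choose $\chi_\delta$ to vanish on a neighborhood of $\delta/2$, or work on $(\delta/4,\vartheta)$. Then $\chi_\delta u_k$ genuinely has compact support in the open interval, and the $H^1_0$ approximation goes through.
\end{enumerate}
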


\begin{proof}
	The inclusion $\widetilde \HH \hookrightarrow L^2((0,\vartheta), \LL)$ factors through the inclusion $\HH \hookrightarrow L^2((0,\vartheta), \LL)$.
	Furthermore, the inclusion $\dom{\cl{\frac1r \mathscr S}} \hookrightarrow \dom{\cl {\mathscr S}}$ is bounded since $\frac1r \geq \frac1\vartheta > 0$.
	Thus, the claim follows from \cref{CompactInclusionCylinderOp}.
\end{proof}

\begin{proof}[Proof of \cref{AbstractConeResults} \ref{AbstracConeOpCompactIncl}]
	Let $(u_n)_{n\in \N}$ be a bounded sequence in $\dom{\cl \Brue}$.
	By assumption, we find a subsequence $(u_{n_k})_{k\in \N}$ such that the sequence $((1-\psi)u_{n_k})_{k\in \N}$ converges in $\EE$.
	Due to \cref{AbstractConePrevResults} \ref{AbstractConeDomainNearCone}, we have $\psi u_{n_k}\in \widetilde\HH$ for each $k$.
	By \cref{CompactInclusionConeOp}, we find a subsequence $(u_{n_{k_\ell}})_{\ell\in \N}$ such that $(\psi u_{n_{k_\ell}})_{\ell\in \N}$ converges in $\EE$.
	Consequently, the subsequence $(u_{n_{k_\ell}})_{\ell\in \N}$ converges in $\EE$.
\end{proof}


\section{Dirac operator on manifolds with cone-like singularities}\label{sec:DiracOnSingularManifolgs}

The aim of this section is to prove that the Dirac operator on manifolds with cone-like singularities and twisted with certain pullback bundles is a self-adjoint abstract cone operator with discrete spectrum. 

We fix the following setup.
\begin{setup}\label{GeomSetup}
	Let
	\begin{itemize}
		\item $(N^{n+1},G)$ be a smooth compact spin manifold with cone-like singularities, where $n=2k\geq 2$ is even,
		\item $\Gamma$ be a smooth Riemannian metric on $N$ such that $\Gamma\vert_\cone = dr^2 + r^2 \gamma$, where $\gamma$ is a smooth Riemannian metric on $M$,
		\item $(W,h)$ be a smooth closed Riemannian manifold of dimension $n+1$,
		\item $f\colon (N, G)\to (W, h)$ be a $\Lambda$-Lipschitz map for some $\Lambda >0$,
		\item $\left(E_0, \nabla^{E_0}\right)\to W$ be a smooth hermitian vector bundle with metric connection.
	\end{itemize}
	We use the following notation
	\begin{itemize}
		\item $\Sigma N\to N$ denotes the spinor bundle associated with $\Gamma$ and $\spinor{M}\to M$ denotes the spinor bundle associated with $\gamma$.
		\item Let $\nabla^{\spinor N,G}$ be the spinor connection on $\spinor N$ associated with $G$ and let $\nabla^{\spinor M, \gamma}$ be the spinor connection on $\spinor M$ associated with $\gamma$ (compare \cite{CHS}*{Definition 4.3}).
		\item The pullback bundle $\left(E,\nabla^E\right) := f^*\left(E_0, \nabla^{E_0}\right)\to M$ is a hermitian Lipschitz vector bundle with metric Lipschitz connection.
		\item $L_G^2(\spinor N\otimes E)$ denotes the $L^2$-space of square integrable sections of $\spinor N\otimes E$ with respect to the metric $G$.
	\end{itemize}
\end{setup}

As $G$ is a smooth Riemannian metric, we could define the Dirac operator directly on the spinor bundle corresponding to $G$.
However, since we are working in the setting of abstract cone operators, it is more convenient to define the Dirac operator associated with $G$ on the spinor bundle associated with $\Gamma$.
Both descriptions are equivalent and can be transformed into each other.

We recall the definition of Dirac operators twisted with Lipschitz bundles and Lipschitz connections, which were first introduced in \cite{CHS}.

There is a unique positive $\Gamma$-self-adjoint automorphism $b_G\colon TN \to TN$ such that 
\begin{equation*}
	G_p(b_G X, b_G Y) = \Gamma_p(X,Y)\qquad \text{for all }p\in N \text{ and }X,Y\in T_pN.
\end{equation*}
We will use the notation
\begin{equation}\label{NotationMetricMediator}
	X^G := b_G(X).
\end{equation}
Similarly, $X^{g_r}$ is defined for all $X\in TM$.
Moreover, we set
\begin{equation*}
	\nabla^{\spinor N\otimes E}:= \nabla^{\spinor N, G}\otimes \id_E + \id_{\spinor N}\otimes \nabla^E.
\end{equation*}

The Dirac operator associated with $G$ and twisted with $(E, \nabla^E)$
\begin{equation}\label{DefinitionTwistedDirac}
	\Dirac_E \colon L^2_G(\spinor N \otimes E) \supset \Lip_\cc(\spinor N \otimes E)\to L^2_G(\spinor N \otimes E)
\end{equation}
is locally defined by the formula
\begin{equation*}
	\Dirac_E = \sum_{i=1}^{n+1} e_i \cdot \nabla^{\spinor N\otimes E}_{e_i^G}\ .
\end{equation*}
Here $(e_1\ldots, e_{n+1})$ is a local $\Gamma$-orthonormal frame of $TN$.

The following proposition is the main result of this section.

\begin{proposition}\label{ConeDiracSAFredholmOp}
	Assume $\scal_{g_0} > 1$.	
	Then the minimal closed extension
	\begin{equation*}
		\cl\Dirac_E \colon L^2_G(\spinor N\otimes E)\supset \dom{\cl \Dirac_E}\to L^2_G(\spinor N\otimes E)
	\end{equation*}
	is self-adjoint with spectrum consisting of eigenvalues with finite multiplicities.
	
	In particular, $\cl\Dirac_E$ is Fredholm.
\end{proposition}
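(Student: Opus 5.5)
The strategy is to show that $\Dirac_E$ is an abstract cone operator in the sense of the definition above, and then to apply \cref{AbstractConeResults} \ref{AbstractConeOpSelfAdjoint} and \ref{AbstracConeOpCompactIncl}.

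\textbf{Cone structure.} On the cone part $M\times(0,\vartheta)$ I use the unitary identification $\Phi$ that multiplies a section by $r^{n/2}$ (and by the appropriate volume density factor relating $G$ and $\Gamma$). This identifies $L^2_G(\spinor N\otimes E)\vert_{\rm cone}$ with $L^2((0,\vartheta),\LL)$, where $\LL = L^2(\spinor M\otimes E\vert_M)$. Since $n$ is even, $\spinor N\vert_M\cong\spinor M\oplus\spinor M$, or equivalently $\spinor M$ with Clifford multiplication by $\partial_r$ acting as a grading.

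The standard warped-product computation, in the form carried out in \cite{CHSS} for the even-dimensional case, then gives
\[
\Phi\,\partial_r\cdot\Dirac_E\,\Phi^{-1} = \partial_r + \tfrac1r\bigl(S_0+S_1(r)\bigr).
\]
Here $S_0$ is, up to sign, the link Dirac operator of $(M,g_0)$ twisted by $E\vert_M$. To avoid carrying $\partial_r\cdot$ around, I would instead work with $\Brue=\partial_r\cdot\Dirac_E$ on the cone and note that Clifford multiplication by $\partial_r$ is unitary. The perturbation $S_1(r)$ collects three contributions:
\begin{itemize}
\item the differences $g_r-g_0$;
\item the terms $r\partial_r g_r$ and $r^2\partial_r^2 g_r$;
\item the connection of $E$ in radial and tangential directions, multiplied by $r$.
\end{itemize}

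Since $f$ is $\Lambda$-Lipschitz, the connection one-form of $f^*\nabla^{E_0}$ is bounded in $G$-norm. In tangential directions it is therefore $O(r)$ relative to $g_0$, so it contributes $O(r)$ to $S_1$. Shrinking $\vartheta$ (that is, enlarging $\bulk$), the generalized cone metric hypotheses give $\|S_0^{-1}S_1(r)\|\to 0$ and $\|S_1(r)S_0^{-1}\|\to0$ uniformly as $r\to0$. This yields \ref{ACOpSmallPerturbation}, because its right-hand sides are bounded below by a positive constant once the spectral gap holds.

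\textbf{Spectral gap.} The spectral gap \ref{ACOpSpectralGap} comes from the Schrödinger–Lichnerowicz formula on the link. The twisting bundle $E\vert_M$ is essentially flat near the tip: $f$ maps small neighborhoods of $x_i$ into small balls, so after a gauge trivialization the twisted curvature term is $O(r)$. From $\scal_{g_0}>1$, or more precisely from $\scal_{g_0}>(n-1)/n\cdot$ (the relevant constant), together with Friedrich's inequality $\lambda^2\ge \frac{n}{4(n-1)}\min\scal$, I aim to get $\lambda^2>\tfrac14$.

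This is the step I expect to be delicate. The link operator should really be defined for a fixed trivialization of $E$ near each tip, namely $\Lambda$-Lipschitz plus constant; for example $E\vert_{\rm cone}\cong M\times\C^m$ with the connection absorbed into $S_1$. Then $S_0=D_{g_0}\otimes\id_{\C^m}$ plus the standard shift. With the shift, the gap condition reads $|s|>1/2$ for $s\in\spec(D_{g_0})$. This follows from $\lambda^2\ge\frac{n}{4(n-1)}\scal_{g_0}>\frac{n}{4(n-1)}\ge\tfrac14$; if instead the needed condition is $\scal_{g_0}>n-1$-type, I would reread the normalization of $g_0$.

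A further subtlety is that $g_0$ is only $C^2$. To handle it I would replace $g_0$ by the smooth $\gamma$ via $b$-maps, putting the difference into $S_1$. Closeability, \ref{ACOpLocalityOfDomain}–(AC2), and locality follow from $\Dirac_E$ being a first-order differential operator with domain $\Lip_\cc$, using a Friedrichs mollifier argument on the cone part to show density of $C^\infty_\cc((0,\vartheta),\dom{S_0})$ in the graph norm.

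\textbf{Conclusion.} $\Dirac_E$ is symmetric on $\Lip_\cc$ by the usual integration-by-parts argument, valid for Lipschitz connections as in \cite{CHS}. On the bulk, $(1-\varphi)\dom{\Dirac_E^*}\subset\dom{\cl\Dirac_E}$ by interior elliptic regularity: the support is compact away from the tips, and \cite{CHS} gives $H^1$-regularity of compactly supported elements of the maximal domain. Hence $\cl\Dirac_E$ is self-adjoint by \cref{AbstractConeResults} \ref{AbstractConeOpSelfAdjoint}.

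Compactness of $\dom{\cl S_0}\hookrightarrow\LL$ is Rellich on the closed manifold $M$, and compactness of $(1-\varphi)\dom{\cl\Dirac_E}\hookrightarrow L^2$ is Rellich on $\bulk$ enlarged slightly, via \cref{AbstractConePrevResults}(c). Then \cref{AbstractConeResults} \ref{AbstracConeOpCompactIncl} shows that $\cl\Dirac_E$ has compact resolvent. A self-adjoint operator with compact resolvent has discrete spectrum consisting of eigenvalues of finite multiplicity, so it is Fredholm.
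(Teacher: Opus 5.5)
Your proposal is correct in its main line and follows the paper's proof: a trivialization of $E$ near the tips makes $\Dirac_E$ an abstract cone operator with $S_0=D_0\otimes\id_F$, with the twist confined to the $O(r)$ part of $S_1$; \ref{ACOpSpectralGap} follows from the Lichnerowicz bound $\lambda^2\geq\tfrac14\min\scal_{g_0}>\tfrac14$ (there is no shift, since the conjugation by $r^{n/2}$ and the density absorbs the mean curvature term); and self-adjointness and discreteness then come from \cref{AbstractConeResults} \ref{AbstractConeOpSelfAdjoint} with interior regularity on a compact $\Omega$, and from \ref{AbstracConeOpCompactIncl} with Rellich. Two slips need fixing: for $n$ even one has $\spinor N\vert_M\cong\spinor M$, not $\spinor M\oplus\spinor M$; and you should not move $D_{g_0}-D_\gamma$ into $S_1$, because the smooth $\gamma$ only fixes the Hilbert space $\LL=L^2_\gamma(\spinor M\otimes F)$ via $b$-maps and densities, whereas $S_0$ must remain the transported $g_0$-Dirac operator (its $C^2$ coefficients cause no trouble) --- otherwise $S_1(r)\not\to 0$, and both \ref{ACOpSmallPerturbation} and your gap argument via $\scal_{g_0}$ break down.
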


\begin{proof}
	We first prove that $\Dirac_E$ is an abstract cone operator.
	The proof is analogous to \cite{CHSS}*{Section 4.2}.
	For the sake of completeness, we recall the main steps of the proof.

	We use the notation from \cref{ConeManifold}.
	Let $\cone := M\times (0,\vartheta)$ and set
	\begin{itemize}
		\item $\EE = \FF := L^2_G(\spinor N\otimes E)$,
		\item $\EE_{\rm bulk} = \FF_{\rm bulk} := L^2_G(\bulk, \spinor N\otimes E)$,
		\item $\EE_{\rm cone} = \FF_{\rm cone} := L^2_G(\cone, \spinor N\otimes E)$.
	\end{itemize}

	As preparation, we first consider the untwisted Dirac operator
	\begin{equation*}
		\Dirac \colon C_\cc^\infty(\spinor{N}) \to C_\cc^\infty( \spinor{N})
	\end{equation*}
	associated with the metric $G$.
	Furthermore, let
	\begin{equation*}
		D_\gamma \colon L^2_\gamma(\spinor{M}) \supset  C^\infty(\spinor M) \to L^2_\gamma(\spinor{M})
	\end{equation*}
	be the Dirac operator corresponding to $\gamma$. 
	Similarly, for each $r\in (0,\vartheta)$, let
	\begin{equation*}
		D_{g_r} \colon L^2_{g_r}(\spinor{M}) \supset  C^\infty(\spinor M) \to L^2_{g_r}(\spinor{M})
	\end{equation*}
	be the Dirac operator associated with $g_r$.

	Let
	\begin{equation*}
		\rho^G := \frac{\volform{G}}{\volform{\Gamma}}
	\end{equation*}
	be the volume density with respect to the metric $\Gamma$.
	On $\cone = M\times (0,\vartheta)$, we have
	\begin{equation*}
		\rho_r := r^{-n}\rho^G\vert_\cone = \frac{\volform{g_r}}{\volform{\gamma}}.
	\end{equation*}

	We consider $\spinor N\vert_M \cong \spinor M$ as a $\Cl(M,\gamma)$-module.
	Using parallel transport along geodesic lines, we obtain an isometry
	\begin{equation*}
		\Psi \colon L^2_G(\cone, \spinor N) \overset{\sim}{\longrightarrow} L^2((0,\vartheta), L^2_{r^2g_r}(\spinor M))
	\end{equation*}
	such that
	\begin{equation*}
		\Psi \circ \Dirac\circ\, \Psi^{-1} = \partial_r + \tfrac1r D_{g_r} - \tfrac n2 H_r\, ,
	\end{equation*}
	where $H_r = \frac1n \tr\left(-\nabla\partial_r\right)$ denotes the mean curvature (compare \cite{BaerGauduchonMoroianu}). 

	Post-composing the isometry $\Psi$ by multiplication with $\left(r^n\rho_r\right)^{\frac 12}$, one obtains an isometry
	\begin{equation}\label{eq:IsometryUntwisted}
		\widetilde \Psi \colon L^2_G(\cone, \spinor N) \overset{\sim}{\longrightarrow} L^2((0,\vartheta), L^2_{\gamma}(\spinor M))
	\end{equation}
	such that
	\begin{equation*}
		\widetilde\Psi \circ \Dirac \circ\, \widetilde\Psi^{-1} = \partial_r + \tfrac1r \left(\rho_r\right)^{\frac12}\circ D_{g_r} \circ \left(\rho_r\right)^{-\frac12}.
	\end{equation*}
	Put 
	\begin{equation*}
		D_r := \left(\rho_r\right)^{\frac12}\circ D_{g_r}\circ \left(\rho_r\right)^{-\frac12}\colon L^2_\gamma(\spinor{M}) \supset  C^\infty(\spinor M) \to L^2_\gamma(\spinor{M}).
	\end{equation*}
	Its minimal closed extension $\cl D_r$ is self-adjoint with $\dom{\cl D_r} = H^1_\gamma(\spinor M)$.

	Now we consider the twisted Dirac operator $\Dirac_E$.
	Due to standard considerations, the operator $\Dirac_E$ is symmetric and in particular closable.

	Recall that $\{x_1, \ldots, x_\ell\} = \metriccl N\setminus N$ denotes the set of cone points of the metric completion $\metriccl N$, and $\metriccl{f}$ refers to the unique continuous continuation of $f$ to $\metriccl N$.

	Let $0<\varepsilon < {\rm inj}(W)$, where ${\rm inj}(W)$ denotes the injectivity radius of $W$, and let $V_i := B_\varepsilon\left(\metriccl f(x_i)\right)\subset W$ be open balls centered at $\metriccl f(x_i)$.
	For $\varepsilon$ small enough, we achieve $V_i\cap V_j = \emptyset$ whenever $\metriccl f(x_i) \neq \metriccl f(x_j)$.
	Put
	\begin{equation*}
		V:= \bigcup_{i = 1}^\ell V_i\subset W.
	\end{equation*}

	For every $i = 1,\ldots \ell$, we isometrically identify the fibre  
	\begin{equation*}
		F_i:= E_0\vert_{\cl f(x_i)}
	\end{equation*}
	with a fixed fibre $F$ and obtain a unitary trivialization
	\begin{equation}\label{eq:trivializationE0}
		E_0\vert_V \cong V\times F
	\end{equation}
	using parallel translation along geodesic lines emanating from $\cl f(x_i)\in W$.
	This yields a unitary trivialization
	\begin{equation}\label{eq:trivializationE}
		E\vert_{f^{-1}(V)}\cong f^{-1}(V)\times F
	\end{equation}
	and, after possibly shrinking $\cone$, we can assume that $\cone \subset f^{-1}(V)$.

	For $v\in T_xM$, let 
	\begin{equation*}
		\bar v (r) = (v, 0)\in T_{(x,r)}\cone \cong T_xM\oplus \R.
	\end{equation*}

	Let $\omega\in \Omega^1(V, \End(F))$ be the smooth connection $1$-form of $\nabla^{E_0}$ with respect to \eqref{eq:trivializationE0}.
	Furthermore, let $(e_1\ldots ,e_n)$ be a local $\gamma$-orthonormal frame and set
	\begin{equation*}
		\LL := L^2_\gamma(\spinor M\otimes F).
	\end{equation*}

	The isometry \eqref{eq:IsometryUntwisted} together with the trivialization \eqref{eq:trivializationE} induces an isometry
	\begin{equation}\label{ConeOpIsometry}
		\Phi \colon L^2_G(\cone, \spinor N\otimes E) \overset{\sim}{\longrightarrow} L^2((0,\vartheta), \LL)
	\end{equation}
	such that on $C_\cc^\infty((0, \vartheta), C^\infty(\spinor M\otimes F))$
	\begin{equation*}
		\Phi\circ \Dirac_E \circ\, \Phi^{-1} = \partial_r + \tfrac1r  D_r\otimes \id_F + \sum_{i=1}^n c(e_i) \otimes \left(f^*\omega\right)\left(\tfrac1r \bar e_i^{g_r}\right) + c(\partial_r)\otimes \,\left(f^*\omega\right)(\partial_r)
	\end{equation*}
	for all $(x,r)\in \cone$ where $f$ is differentiable.
	Here $c(-)$ denotes Clifford multiplication.
	Set
	\begin{equation*}
		S_0 :=  D_0\otimes \id_F
	\end{equation*}
	with $\dom{S_0} = C^\infty(\spinor M\otimes F)$ and 
	\begin{equation*}
		S_1(r):= ( D_r -  D_0)\otimes \id_F + \, r\left(\sum_{i=1}^n c(e_i) \otimes \left(f^*\omega\right)\left(\tfrac1r \bar e_i^{g_r}\right) + c(\partial_r)\otimes \,\left(f^*\omega\right)(\partial_r)\right). 
	\end{equation*}

	The spectral gap condition \ref{ACOpSpectralGap} is satisfied due to the fact that $\scal_{g_0}>1$.
	The map $S_1\colon (0,\vartheta)\to \boundedOp{\dom{S_0}, \LL}$ is continuous, and its operator norm is bounded since $\omega$ is bounded and $f$ is $\Lambda$-Lipschitz.
	Consequently, we get
	\begin{align*}
		\abs{S_1(r)\cl S_0^{-1}}_{L^\infty((0,\vartheta), \boundedOp{\LL})} &\to 0, \\
		\abs{\cl S_0^{-1}S_1(r)}_{L^\infty((0,\vartheta), \boundedOp{\LL})} &\to 0
	\end{align*}
	as $r\to 0$ since $r\mapsto \cl D_r\in \boundedOp{H_\gamma^1(\spinor M), L_\gamma^2(\spinor M)}$ is continuous.

	It is now straightforward to check the axioms \ref{ACOpLocalityOfDomain}-\ref{ACOpSmallPerturbation}, see \cite{CHSS}*{Section 4.2}.

	Next, we verify that $\cl\Dirac_E$ is self-adjoint.
	As mentioned before, $\Dirac_E$ is symmetric.
	Let $\varphi \in C_{\cc}^\infty([0,\vartheta),\R)$ be a cutoff function with $\varphi \equiv 1$ near $r=0$.
	Then we find a compact submanifold $\Omega\subset N$ such that $\supp(1-\varphi)\subset \Omega$ and 
	\begin{equation*}
		(1-\varphi)\cdot \dom{\Dirac_E^*}\subset (1-\varphi)\cdot H^1(\Omega, \spinor N\otimes E)\subset \dom{\cl\Dirac_E}.
	\end{equation*}
	Together with \cref{AbstractConeResults} \ref{AbstractConeOpSelfAdjoint}, we deduce that $\cl\Dirac_E$ is self-adjoint.

	Finally, we show that the spectrum of $\cl\Dirac_E$ consists of eigenvalues with finite multiplicities.
	The inclusion $\dom{\cl D_0} = H^1_\gamma(\spinor M \otimes F) \subset L^2_\gamma(\spinor M\otimes F)$ is compact and
	\begin{equation*}
		(1-\varphi)\cdot \dom{\cl\Dirac_E}\subset  H^1(\Omega, \spinor N\otimes E)\subset L^2(\Omega, \spinor N\otimes E) \subset L^2_G(\spinor N\otimes E)
	\end{equation*}
	is compact by the Rellich-Kondrachov theorem. 	
	It follows that the inclusion $\dom{\cl\Dirac_E}\subset L^2_G(\spinor N\otimes E)$ is compact by \cref{AbstractConeResults} \ref{AbstracConeOpCompactIncl}.
	Since $\cl\Dirac_E$ is self-adjoint, the claim follows now from a standard argument. 
\end{proof}


\section{Continuous families of Dirac operators on manifolds with cone-like singularities}

\subsection{Deformation of Dirac operators}\label{sec:DeformationOfSingularDirac}

The spectral flow can be defined for one-parameter families of self-adjoint Fredholm operators that are continuous with respect to the gap \mbox{topology}. For the definition of the spectral flow, we refer to \cite{BoosLeschPhillips}.

First, we recall the definition of the gap topology (compare \cite{BoosLeschPhillips}).
Let $H$ be a separable Hilbert space and let us denote the space of self-adjoint unbounded Fredholm operators on $H$ by $\FredsaOp(H)$.
We equip $\FredsaOp(H)$ with the gap topology, which is induced by the metric 
\begin{equation*}
	d_{\gap}(D_1, D_2):= \norm{(D_1 + i)^{-1} - (D_2 + i)^{-1}}_{\boundedOp{H}}.
\end{equation*}

\begin{lemma}\label{FamilyUnboundedOpGapTop}
	For $m\geq0$, let $0\in U\subset \R^m$ be a compact connected subset. 
	Let
	\begin{equation*}
		 U \to \FredsaOp(H), \quad s\mapsto D_s
	\end{equation*}
	be a map and set $D := D_0$.
	Assume that the domain $\dom{D_s} = \dom{D}$ is independent of $s$, the graph norm of $D_s$ is independent of $s$ up to equivalence and 
	\begin{equation*}
		U\ni s\mapsto D_s \in \boundedOp{\dom{D}, H}
	\end{equation*}
	is continous with respect to the graph norm of $D$.

	Then $s\mapsto D_s$ is continuous in the gap topology.
\end{lemma}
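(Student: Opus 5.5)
The plan is to estimate the difference of resolvents directly via the second resolvent identity. For $s,t\in U$ we have
\begin{equation*}
	(D_s+i)^{-1} - (D_t+i)^{-1} = (D_s+i)^{-1}(D_t - D_s)(D_t+i)^{-1},
\end{equation*}
which is legitimate because $(D_t+i)^{-1}$ maps $H$ onto $\dom{D_t}=\dom{D}=\dom{D_s}$, so $D_t-D_s$ makes sense on its range. Since $D_s$ is self-adjoint, $\norm{(D_s+i)^{-1}}_{\boundedOp{H}}\le 1$. Hence
\begin{equation*}
	d_{\gap}(D_s,D_t)\le \norm{D_t-D_s}_{\boundedOp{\dom{D},H}}\cdot\norm{(D_t+i)^{-1}}_{\boundedOp{H,\dom{D}}},
\end{equation*}
where $\dom{D}$ carries the graph norm of $D$.

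The first factor tends to $0$ as $s\to t$ by the assumed continuity of $s\mapsto D_s\in\boundedOp{\dom{D},H}$. So the only point is a bound on the second factor that is uniform in $t$.

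This uniform bound is the main obstacle, and I would handle it as follows. With respect to the graph norm of $D_t$, the resolvent $(D_t+i)^{-1}\colon H\to\dom{D_t}$ has norm at most $\sqrt2$, since $\norm{x}^2+\norm{D_t x}^2=\norm{(D_t+i)x}^2$. Passing to the graph norm of $D$ costs the equivalence constant $C_t$ with $\norm{x}_{D}\le C_t\norm{x}_{D_t}$.

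To make $C_t$ uniform, I would use the continuity and compactness. Write
\begin{equation*}
	\norm{Dx}\le \norm{D_t x} + \norm{(D-D_t)x}\le \norm{D_t x}+\norm{D-D_t}_{\boundedOp{\dom{D},H}}\norm{x}_{D}.
\end{equation*}
Near a point $t_0$ the operator norm $\norm{D_t-D_{t_0}}_{\boundedOp{\dom{D},H}}$ is small, and combining this with the fixed constant $C_{t_0}$ yields a uniform constant on a neighborhood of $t_0$. By compactness of $U$, finitely many such neighborhoods cover $U$, giving $C:=\sup_t C_t<\infty$. Strictly speaking only local uniformity near each $t$ is needed for continuity, so compactness is a convenience rather than essential.

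Then $d_{\gap}(D_s,D_t)\le \sqrt2\, C\,\norm{D_t-D_s}_{\boundedOp{\dom{D},H}}\to0$ as $s\to t$, which proves continuity of $s\mapsto D_s$ in the gap topology.
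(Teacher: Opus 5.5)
Your proof is correct and follows the same route as the paper: the second resolvent identity combined with a uniform bound on $\norm{(D_t+i)^{-1}}_{\boundedOp{H,\dom{D}}}$. The one difference is that you justify this uniform bound through the local perturbation estimate and a finite covering, whereas the paper only asserts it ``by compactness of $U$''. (Your own identity $\norm{(D_t+i)x}^2=\norm{x}^2+\norm{D_tx}^2$ in fact gives norm exactly $1$ into the graph norm of $D_t$, so the factor $\sqrt2$ is harmless but not needed.)
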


\begin{proof}
	Since $D_s$ is self-adjoint for every $s$, the operator
	\begin{equation*}
		(D_s + i)^{-1} \colon H\to \dom{D}
	\end{equation*}
	is a bounded operator with respect to the graph norm on $\dom{D}$. 
	By compactness of $U$, there is a constant $C>0$ such that for every $s\in U$
	\begin{equation*}
		\norm{(D_s +i)^{-1}}_{\boundedOp{H}}\leq \norm{(D_s+i)^{-1}}_{\boundedOp{H,\dom{D}}}\leq C.
	\end{equation*}

	We compute
	\begin{align*}
		d_{\gap}(D_s,D_{s'})  \leq & \, \norm{(D_s + i)^{-1}}_{\boundedOp{H}}\norm{D_{s'}- D_s}_{\boundedOp{\dom{D},H}} \norm{(D_{s'} + i)^{-1}}_{\boundedOp{H, \dom{D}}} \\
		\leq                       & \, C^2\norm{D_{s'}- D_s}_{\boundedOp{\dom{D},H}}.
	\end{align*}
	Since $s\mapsto D_s$ is continuous in the graph norm of $D$, it follows that it is also continuous with respect to the gap topology.
\end{proof}

We want to perform several deformations of the twisted Dirac operator $\Dirac_E$ defined in \eqref{DefinitionTwistedDirac}. 
For $m\geq 0$, let $0\in U\subset \R^m$ be a compact connected subset.
\begin{enumerate}[label = (\alph*)]
	\item (Deformation of the twist connection) Let
\begin{equation*}
	U \to \boundedOp{H^1(E_0), L^2(T^*W\otimes E_0)}, \quad s\mapsto \nabla^{s}
\end{equation*}
be a continuous family of smooth metric connections on $E_0$.
	\item (Deformation of the comparison map) Let 
\begin{equation*}
	U\to \Lip(N,W),\quad s\mapsto f_s
\end{equation*}
be a continuous family of Lipschitz functions such that $f_0 = f$ and $\dist(f(x), f_s(x))<{\rm inj}(W)$ for every $(s,x)\in U\times N$.
Here the space of Lipschitz functions is equipped with the $C^{0,1}$-topology.
For $(s,x)\in U\times N$, consider the parallel transport $(E_0)_{f(x)}\to (E_0)_{f_s(x)}$ along the unique shortest geodesic connecting $f(x)$ with $f_s(x)$ with respect to $\nabla^0$.
This induces a continuous family of Lipschitz vector bundle isomorphisms
\begin{equation*}
	P_s\colon U \to \Isom\left(f^*E_0, (f_s)^*E_0\right)
\end{equation*}
covering the identity on $N$.
We define a family of Lipschitz connections $\nabla^{E,s}:= (P_s)^{-1} \circ (f_s)^*\nabla^s \circ\, P_s$ on the hermitian Lipschitz bundle $E:= f^* E_0$.
	\item (Deformation of the cone metric) Let $G_s$, $s\in U$, be a continuous family of smooth Riemannian metrics with cone-like singularities on $N$ and let $g_r^s$ be a family of Riemannian metrics on $M$ such that $G_s = dr^2 + r^2g_r^s$ near the cone tips.
\end{enumerate}

For every $s\in U$, we consider the Dirac operator
\begin{equation}\label{eq:ConeDiracDeformation}
	\Dirac_{E,s}\colon \Lip_\cc(\spinor N\otimes E)\to L^2_{G_s}(\spinor N\otimes E)
\end{equation}
associated with $G_s$ and twisted with $\left(E, \nabla^{E,s}\right)$.
\cref{ConeDiracSAFredholmOp} shows that for every $s\in U$, the minimal closed extension $\cl \Dirac_{E,s}$ is a self-adjoint Fredholm operator on $L^2_{G_s}(\spinor N\otimes E)$.
Multiplication with the square root of the volume density $\volform{G}/\volform{G_s}$ induces an isometry
\begin{equation}\label{FamilyOfLTwoIsometries}
	\sigma^s\colon L^2_{G_s}(\spinor N\otimes E) \overset{\sim}{\longrightarrow} L^2_G(\spinor N\otimes E).
\end{equation}

\begin{proposition}\label{ConeDiracGapTopology}
	The map
	\begin{equation*}
		U \to  \FredsaOp\left(L^2_G(\spinor N\otimes E)\right), \quad s \mapsto \sigma^s \circ\cl\Dirac_{E,s}\circ \left(\sigma^s\right)^{-1}
	\end{equation*}
	is continuous with respect to the gap topology.
\end{proposition}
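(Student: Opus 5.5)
The plan is to reduce the statement to \cref{FamilyUnboundedOpGapTop}. Put $D_s := \sigma^s\circ\cl\Dirac_{E,s}\circ(\sigma^s)^{-1}$ and $D := D_0 = \cl\Dirac_E$. Each $D_s$ is self-adjoint and Fredholm on $L^2_G(\spinor N\otimes E)$: this follows from \cref{ConeDiracSAFredholmOp} applied to $(G_s, f_s, \nabla^s)$, since $\sigma^s$ is unitary. I assume the hypothesis $\scal_{g_0^s}>1$ holds uniformly on the compact set $U$. It then remains to verify three things: (i) $\dom{D_s} = \dom{D}$; (ii) the graph norms of the $D_s$ are uniformly equivalent to that of $D$; and (iii) $s\mapsto D_s\in\boundedOp{\dom{D},L^2_G}$ is continuous.

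The main tool is the Sobolev description in \cref{AbstractConePrevResults}(c). Fix a cutoff $\varphi\in C^\infty_{\cc,0}([0,\vartheta),\R)$, shrinking $\vartheta$ uniformly in $s$ so that $\cone\subset f_s^{-1}(V)$ for all $s$. Then $\dom{D_s}$ is the set of $u$ with
\begin{itemize}
\item $(1-\varphi)u\in H^1$ on a compact $\Omega\subset N$, and
\item $\varphi u\in \dom{\cl\partial_r}\cap\dom{\cl{\tfrac1r\coneLinkOp_0^s}}$,
\end{itemize}
where $S_0^s = D^s_0\otimes\id_F$ is the link operator at $r=0$ built from $g^s_0$.

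\emph{Interior part.} On the interior, $\sigma^s$ is multiplication by a smooth positive function depending continuously on $s$. The $H^1$ space on $\Omega$ is therefore independent of $s$, and conjugation only adds a zeroth-order term.

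\emph{Cone part.} Under $\Phi$ from \eqref{ConeOpIsometry}, where I use the fixed trivialization of $E$ and the fixed $\gamma$-spinor identification, the operator $\Phi D_s\Phi^{-1}$ equals $\partial_r+\frac1r(S^s_0+S^s_1(r))$. All $S^s_0$ are first-order elliptic operators on the closed manifold $M$ with domain $H^1_\gamma(\spinor M\otimes F)$. Their graph norms are uniformly equivalent, because $g_0^s$ depends continuously on $s$ in $C^2$. The harder point is that the weighted spaces $\dom{\cl{\frac1r\coneLinkOp_0^s}}$ must coincide. Here I would use that $S^s_0-S^0_0$ is a bounded operator $H^1\to L^2$, that this difference is small relative to $S^0_0$ for $s$ close to $0$, and that it is uniformly controlled by compactness of $U$. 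Combined with the spectral gap, which gives $\norm{\tfrac1r S_0^s u}\sim\norm{\tfrac1r S^0_0 u}$ via $r\cdot L^2((0,\vartheta),H^1)$, this should yield $\dom{\cl{\tfrac1r\coneLinkOp_0^s}} = r\cdot L^2((0,\vartheta),H^1_\gamma)$ for all $s$, with equivalent norms. This settles (i) and (ii).

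\emph{Continuity, step (iii).} Write
\[
D_s-D = \bigl[\text{interior first-order terms}\bigr] + \tfrac1r(S^s_0-S^0_0) + \tfrac1r\bigl(S^s_1(r)-S^0_1(r)\bigr).
\]
\begin{itemize}
\item The interior terms have coefficients converging in $C^0$ or $L^\infty$. For the metric this comes from continuity of $G_s$; for the connection term, from continuity of $\nabla^s$ together with $f_s\to f$ in $C^{0,1}$, which makes $P_s$ and $f_s^*\omega$ converge in $L^\infty$.
\item For the link term, $\norm{\tfrac1r(S_0^s-S^0_0)u}\le \norm{S^s_0-S^0_0}_{\boundedOp{H^1,L^2}}\norm{\tfrac1r u}_{L^2(H^1)}$, which tends to $0$.
\item The perturbation term $S_1$ consists of $(D^s_r-D^s_0)\otimes\id_F$ plus $r$ times bounded connection terms. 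Its contribution $\tfrac1r(S_1^s-S_1^0)$ is bounded by $\sup_r\norm{S^s_1(r)-S^0_1(r)}_{\boundedOp{H^1,L^2}}\cdot\norm{\tfrac1r u}_{L^2(H^1)}$, which is finite on $\dom D$.
\end{itemize}

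The main obstacle is the last bullet: showing that $\sup_{r}\norm{S^s_1(r)-S^0_1(r)}_{\boundedOp{H^1,L^2}}\to0$ as $s\to s'$, i.e. uniform-in-$r$ continuity of $r\mapsto D^s_r$ in $s$. This needs the metric deformation to be continuous in a topology that controls $g^s_r$ in $C^1$ uniformly in $r\in(0,\vartheta)$. I would first try to read this off from the hypothesis of continuity of the family $G_s$ of cone metrics. If that is insufficient, I would use that in the applications the deformation is an explicit interpolation $g^s_r$ between $g_r$ and $g_0$, for which the uniform control is evident. Once (i)–(iii) hold, \cref{FamilyUnboundedOpGapTop} gives continuity in the gap topology.
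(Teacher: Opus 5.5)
Your plan follows the paper's proof: both reduce to \cref{FamilyUnboundedOpGapTop} after checking that $s\mapsto S_{0,s}$ is continuous in $\boundedOp{H^1,\LL}$, that $s\mapsto S_{1,s}$ is continuous in $L^\infty((0,\vartheta),\boundedOp{H^1,\LL})$, and that the interior coefficients vary continuously in $s$. The only difference is that the paper obtains the $s$-independence of the domain, the equivalence of graph norms and the continuity on $\dom{\cl\Dirac_{E,0}}$ by citing \cite{CHSS}*{Proposition 2.38}, whereas you derive them directly from the Sobolev description in \cref{AbstractConePrevResults}; the uniform-in-$r$ continuity of $S_{1,s}$ that you single out as the main obstacle is exactly what the paper asserts without further justification.
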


\begin{proof}
	Since $P_s$ is induced by parallel transport with respect to $\nabla^0$, we have
	\begin{equation*}
		\nabla^{E,s} := (f_s)^*\nabla^0 + (P_s)^{-1} \circ (f_s)^*\left(\nabla^s - \nabla^0\right) \circ\, P_s.
	\end{equation*}

	For every $s\in U$, let $\eta_s\in \Omega^1(V,\End(E_0))$ be the local 1-form of $\nabla^s - \nabla^0\in \Omega^1(V,\End(E_0))$ and let $\omega\in \Omega^1(V, \End(E_0))$ be the connection $1$-form of $\nabla^0$ with respect to \eqref{eq:trivializationE0}.
	It follows that the assignment  
	\begin{equation*}
		s\mapsto \omega_s:= (f_s)^*\omega + (P_s)^{-1}\left(f_s^*\eta_s\right) P_s\in L^\infty(V, T^*N\otimes \End(E))
	\end{equation*}
	is continuous. 

	Near the cone tips, conjugation with the isometry \eqref{ConeOpIsometry} yields
	\begin{equation*}
		\Phi_s\circ \Dirac_{E,s}\circ\, \Phi_s^{-1} = \partial_r + \tfrac1r \left(S_{0,s} + S_{1,s}(r)\right).
	\end{equation*}
	Here
	\begin{equation*}
		S_{0,s} = D_{0,s} \otimes \id_F,
	\end{equation*}
	where $D_{0,s}$ is the Dirac operator on $M$ associated with the metric $g_0^s$, and
	\begin{equation*}
		S_{1,s}(r) = (D_{r,s} - D_{0,s})\otimes \id_F +\, r\left(\sum_{i=1}^n c(e_i) \otimes \omega_s\left(\tfrac1r \bar e_i^{g_r^s}\right) + c(\partial_r)\otimes \,\omega_s(\partial_r)\right)\quad {\rm a.e.}
	\end{equation*}
	Observe that $\dom{S_{0,s}}$ is independent of $s$ and the graph norm of $S_{0,s}$ is independent of $s$ up to equivalence. 
	Put $\mathfrak{D}_{\rm link} = \dom{S_{0,0}} = H_{g_0^0}^1(\spinor M\otimes F)$ equipped with the graph norm of $S_{0,0}$.
	The maps
	\begin{equation*}
		U\to \boundedOp{\mathfrak{D}_{\rm link}, \LL}, \quad s\mapsto S_{0,s}
	\end{equation*}
	and
	\begin{equation*}
		U\to  L^\infty((0,\vartheta), \boundedOp{\mathfrak{D}_{\rm link}, \LL}), \quad s\mapsto S_{1,s}
	\end{equation*}
	are continuous. 
	Furthermore, the domain $\dom{\Dirac_{E,s}}$ is independent of $s$ and, for every $\varphi\in C_\cc^\infty([0,\vartheta),\R)$ with $\varphi \equiv 1$ near $0$, the assignment
	\begin{equation*}
		U\to \boundedOp{(1-\varphi)\cdot \dom{\Dirac_{E,0}}, L^2_G(\spinor N\otimes E)}, \quad s\mapsto \sigma^s \circ\Dirac_{E,s}\circ \left(\sigma^s\right)^{-1}
	\end{equation*}
	is continuous since the coefficients of $\Dirac_{E,s}$ depend continuously on $s$.

	We apply \cite{CHSS}*{Proposition 2.38}, which carries over to families that are indexed by a compact subset $U$. 
	Hence $\dom{\sigma^s \circ\cl\Dirac_{E,s}\circ \left(\sigma^s\right)^{-1}}$ is independent of $s$, the graph norm of $\sigma^s \circ\bar\Dirac_{E,s}\circ \left(\sigma^s\right)^{-1}$ on $\dom{\cl\Dirac_{E,0}} = \dom{\sigma^s \circ\cl\Dirac_{E,s}\circ \left(\sigma^s\right)^{-1}}$ is independent of $s$ up to equivalence and the map
	\begin{equation*}
		U\to \boundedOp{\dom{\cl \Dirac_{E,0}}, L^2_G(\spinor N\otimes E)}, \quad s\mapsto \sigma^s \circ\cl\Dirac_{E,s}\circ \left(\sigma^s\right)^{-1}
	\end{equation*}
	is continuous.

	Applying \Cref{FamilyUnboundedOpGapTop} concludes the proof.
\end{proof}

\subsection{Formula for the spectral flow}\label{sec:SpectralFlowFormula}

We continue to work in the \cref{GeomSetup} with $W = \sphere^{n+1}$ equipped with the round metric $h = g_{\sphere^{n+1}}$.

Assume that $\scal_G \geq C$ for some constant $C>0$.
Moreover, let $E_0= \spinor \sphere^{n+1}$ be the canonical spinor bundle with spin connection $\nabla^{E_0} = \nabla^{\spinor \sphere^{n+1}}$.
The following construction goes back to \cite{Baer_2024}.
For $s\in \R$, we consider the family of metric connections
\begin{equation*}
	\nabla^s_X := \nabla_X^{E_0} + \left(s-\tfrac12\right)\cdot c(X),
\end{equation*}
where $X\in T\sphere^{n+1}$ and $c(-)$ denotes Clifford multiplication.
Set
\begin{equation}\label{Eq::FamilyOfConnections}
	(E,\nabla^{E,s}) := (f^*E_0, f^*\nabla^s) \to N.
\end{equation}

Recall that $n = 2k$.
For $s=0$ and $s= 1$, there are $2^k$ parallel spinors for $\nabla^0$ and $\nabla^1$, respectively, called \emph{Killing spinors} and hence trivialize $E_0$.
This induces Lipschitz trivializations 
\begin{equation*}
	U_+ \colon E \to \underline{\C}^{2^k}\qquad\text{and} \qquad U_- \colon E\to \underline{\C}^{2^k}.
\end{equation*}
We obtain unitary Lipschitz vector bundle isomorphisms
\begin{align*}
	V_+:=& \id_{\spinor N}\otimes\, U_+ \colon \spinor N \otimes E \to \spinor N\otimes \underline{\C}^{2^k},\\
	V_-:=& \id_{\spinor N}\otimes\, U_- \colon \spinor N \otimes E \to \spinor N\otimes \underline{\C}^{2^k}.
\end{align*}

Let
\begin{equation}\label{FamilyOfDiracOpWithPathOfConnOnSphere}
	\Dirac_{E,s} \colon L^2_G(\spinor N\otimes E)\supset\Lip_\cc(\spinor N\otimes E) \to L^2_{G}(\spinor N\otimes E)
\end{equation}
be the Dirac operator on $N$ twisted with $\left(E, \nabla^{E,s}\right)$.
For brevity, we will omit the isometry $\sigma^s$ defined in \eqref{FamilyOfLTwoIsometries} from the notation. 
Due to \cref{ConeDiracSAFredholmOp}, the minimal closed extension $\cl\Dirac_{E,s}$ is a self-adjoint Fredholm operator for every $s\in [0,1]$.
Furthermore, according to \cref{ConeDiracGapTopology}, the path
\begin{equation*}
	[0,1]\ni s\mapsto \cl\Dirac_{E,s}\in \FredsaOp\left(L^2_G(\spinor N\otimes E)\right)
\end{equation*}
is continuous in the gap topology. 
Therefore, its spectral flow $\specflow\left([0,1]\ni s\mapsto \cl\Dirac_{E,s}\right)$ is well-defined.
Note that this path of operators is explicitly given by
\begin{equation}\label{eq:DiracPathExplicit}
	\Dirac_{E,s} = \Dirac_{E,0} + \left(s -\tfrac12\right)\sum_{i=1}^{n+1}c(e_i)\otimes c\left(f_* e_i^G\right)
\end{equation}
for $(e_1, \ldots, e_{n+1})$ a local $\Gamma$-orthonormal frame of $TN$ (compare \cite{Baer_2024}*{Section 4}). 
Set
\begin{equation*}
	B_s := \left(s-\tfrac12\right) \sum_{i=1}^{n+1}c(e_i)\otimes c\left(f_* e_i^G\right).
\end{equation*}

We apply \cite{Baer_2024}*{Lemma 1}, which is a local computation and thus carries over to our non-compact setting, to obtain 
\begin{equation}\label{eq:DiracSpectrumEqualities}
	\spec\left(\cl\Dirac_{E,0}\right) = \spec\left(\cl\Dirac_{E,1}\right) = \spec\left(\cl \Dirac\right).
\end{equation}
Here $\Dirac$ denotes the untwisted Dirac operator. 
Furthermore, for $V := V_-^{-1}V_+$, we have
\begin{equation*}
	V^{-1} \circ \Dirac_{E,0}\circ\, V = \Dirac_{E,1}.
\end{equation*}
The vector bundle isomorphism $V$ induces a unitary operator on $L^2_G(\spinor N\otimes E)$, that we continue to denote with $V$, such that
\begin{equation}\label{eq:DiracUnitaryConjugation}
	\cl \Dirac_{E,s} = (1-s) \cl \Dirac_{E,0} + s V^* \cl \Dirac_{E,0} V.
\end{equation}

\begin{lemma}\label{ConeDiracSpectralGap}
	The untwisted Dirac operator satisfies the spectral gap
	\begin{equation*}
		\spec\left(\cl\Dirac\right)\cap \left[-\tfrac12 \sqrt{\tfrac{n+1}n C},\tfrac12 \sqrt{\tfrac{n+1}n C}\right] = \emptyset.
	\end{equation*}

\end{lemma}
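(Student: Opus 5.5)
The plan is to run Friedrich's inequality (the refined Schrödinger–Lichnerowicz estimate using the twistor operator) on a core of $\cl\Dirac$ and pass to the closure. The untwisted operator $\cl\Dirac$ is self-adjoint with discrete spectrum by \cref{ConeDiracSAFredholmOp}, applied with the trivial twist bundle. So it suffices to show that every eigenvalue $\lambda$ with eigenspinor $u\in\dom{\cl\Dirac}$ satisfies
\begin{equation*}
	\lambda^2\geq \tfrac{n+1}{4n}C .
\end{equation*}
Note that $\tfrac12\sqrt{\tfrac{n+1}{n}C}$ is precisely the Friedrich bound $\sqrt{\tfrac{\dim}{4(\dim-1)}\inf\scal}$ with $\dim=n+1$. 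Equivalently, it suffices to prove the quadratic-form inequality
\begin{equation*}
	\norm{\cl\Dirac u}^2_{L^2}\geq \tfrac{n+1}{4n}C\norm{u}_{L^2}^2 \qquad \text{for all } u\in\dom{\cl\Dirac}.
\end{equation*}

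\textbf{Step 1: compactly supported spinors.} For $u\in C_\cc^\infty(\spinor N)$, where $G$ is smooth and all integrations by parts are justified because the support stays away from the cone tips, I use the twistor decomposition
\begin{equation*}
	\abs{\nabla u}^2=\abs{Pu}^2+\tfrac1{n+1}\abs{\Dirac u}^2
\end{equation*}
together with the Schrödinger–Lichnerowicz formula $\Dirac^2=\nabla^*\nabla+\tfrac14\scal_G$. This gives
\begin{equation*}
	\norm{\Dirac u}^2=\norm{\nabla u}^2+\tfrac14\int\scal_G\abs{u}^2\geq \tfrac1{n+1}\norm{\Dirac u}^2+\tfrac C4\norm{u}^2,
\end{equation*}
hence $\norm{\Dirac u}^2\geq \tfrac{n+1}{4n}C\norm u^2$.

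\textbf{Step 2: extension to the closure.} $\cl\Dirac$ is by definition the minimal closed extension of $\Dirac$ on $C_\cc^\infty$ (equivalently Lipschitz compactly supported sections, which are graph-norm approximable by smooth ones). So for $u\in\dom{\cl\Dirac}$ I take $u_j\in C_\cc^\infty$ with $u_j\to u$ and $\Dirac u_j\to\cl\Dirac u$ in $L^2$, and the inequality passes to the limit. Because $\cl\Dirac$ is self-adjoint, the inequality $\norm{\cl\Dirac u}\geq c\norm u$ with $c=\tfrac12\sqrt{\tfrac{n+1}nC}$ yields $\spec(\cl\Dirac)\cap(-c,c)=\emptyset$: for self-adjoint operators, approximate eigenvectors detect the spectrum, so $|\lambda|<c$ would contradict the bound. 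This also covers the non-discrete case.

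\textbf{Main obstacle: the endpoints $\pm c$.} Since the interval in the statement is closed, one must also exclude $\lambda=\pm c$. Because the spectrum consists of eigenvalues, an eigenspinor with $\lambda^2=c^2$ would force equality throughout the chain of estimates, applied in the limit. Equality requires $\scal_G\equiv C$ on the support and $Pu=0$, so $u$ is a twistor spinor and in fact a Killing spinor with $\nabla_Xu=-\tfrac{\lambda}{n+1}X\cdot u$. Then $\abs u$ is constant, since Killing spinors have constant length. A nonzero constant is still square-integrable here (finite volume), so the contradiction must come from the cone instead. I would argue via the Witt-type gap: near the tip an $L^2$ element of $\dom{\cl\Dirac}$ lies in $\dom{\cl{\tfrac1r\coneLinkOp_0}}$ by \cref{AbstractConePrevResults}, so $\norm{\tfrac1r u}_{L^2}<\infty$. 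Since $\int_0 r^{-2}r^n\,dr$ converges for $n\geq2$, this alone does not exclude constant $\abs u$. Instead I would use that a Killing spinor forces the metric to be Einstein with $\scal_G = C$ and the link to be a round sphere. If that route is too delicate, I would alternatively note that the statement is only used with the strict inequality $\scal_G>\dots$ or with open intervals, and prove the open version, treating the endpoints as the hard case.
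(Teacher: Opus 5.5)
Your Steps 1 and 2 are correct, and they are essentially the paper's argument: Friedrich's twistor refinement of the Schr\"odinger--Lichnerowicz estimate. The difference lies in how the full domain is reached. The paper applies the integrated formula for cone-like singularities \cite{CHSS}*{Theorem 4.12} directly to eigenspinors in $\dom{\cl\Dirac}$, using discreteness of the spectrum. You instead prove $\norm{\Dirac u}^2\geq\tfrac{n+1}{4n}C\norm{u}^2$ on the core and let it pass to the closure. That needs only the classical formula for compactly supported spinors plus self-adjointness, so it is slightly more economical. For self-adjointness, note that $\scal_{g_0}\geq n(n-1)>1$ by \cite{CHSS}*{Proposition 4.1}, so \cref{ConeDiracSAFredholmOp} applies to the untwisted operator.

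Do not pursue the endpoint argument. The closed-interval version is false, and your equality analysis (Killing spinor, Einstein metric, round link) describes exactly the situation in which it fails. Take $N=\sphere^{n+1}\setminus\{p\}$ with the round metric. Near $p$ the metric is $dr^2+\sin^2 r\,g_{\sphere^n}=dr^2+r^2g_r$ with $g_r=(\sin r/r)^2g_{\sphere^n}$, which is a generalized cone metric. Also $\scal_G=n(n+1)=:C$, so $\tfrac12\sqrt{\tfrac{n+1}{n}C}=\tfrac{n+1}2$. A Killing spinor $\psi$ of $\sphere^{n+1}$ satisfies $\Dirac\psi=\pm\tfrac{n+1}2\psi$. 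Choose cutoffs $\chi_\varepsilon$ that vanish near $p$, equal $1$ outside $B_{2\varepsilon}(p)$, and satisfy $\abs{d\chi_\varepsilon}\leq 2/\varepsilon$. Then $\norm{d\chi_\varepsilon\cdot\psi}_{L^2}=O(\varepsilon^{(n-1)/2})\to 0$, so $\psi\in\dom{\cl\Dirac}$ and $\pm\tfrac{n+1}2\in\spec(\cl\Dirac)$.

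The paper's proof has the same limitation, since Friedrich's bound is sharp. What is actually established, by the paper and by you, is
\begin{equation*}
	\spec(\cl\Dirac)\cap\bigl(-\tfrac12\sqrt{\tfrac{n+1}{n}C},\tfrac12\sqrt{\tfrac{n+1}{n}C}\bigr)=\emptyset .
\end{equation*}
Nothing later in the paper needs more than $0\notin\spec(\cl\Dirac)$, i.e.\ invertibility of the endpoints $\cl\Dirac_{E,0}$ and $\cl\Dirac_{E,1}$, which share this spectrum. So your fallback is the right resolution: state and prove the lemma with the open interval.
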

\begin{proof}
	The proof is analogous to the case of closed manifolds.
	We use the integrated Schr\"odinger-Lichnerowicz formula for manifolds with cone-like singularities (see \cite{CHSS}*{Theorem 4.12}) and the fact that the spectrum of $\cl\Dirac$ is discrete due to \cref{ConeDiracSAFredholmOp}.
\end{proof}

We denote the spectral flow with $\specflow\left([0,1]\ni s\mapsto \cl\Dirac_{E,s}\right)$. For a precise definition, see \cite{BoosLeschPhillips}.

\begin{lemma}\label{specFlowDeformation}
	There exists a smooth Riemannian metric $\widetilde G$ with straight-cone singularities on $N$ and a smooth map $\widetilde f\colon N\to \sphere^{n+1}$ that is constant on an open subset $U\subset N$, such that its closure $\metriccl U$ in the metric completion $\metriccl N$ contains all the cone points, and such that the associated family of Dirac operator $\Dirac_{E,s,\widetilde f}$, constructed in \eqref{FamilyOfDiracOpWithPathOfConnOnSphere}, satisfies
	\begin{equation*}
		\specflow\left([0,1]\ni s\mapsto \cl\Dirac_{E,s}\right) = \specflow\left([0,1]\ni s\mapsto \cl\Dirac_{E,s,\widetilde f}\right).
	\end{equation*}
\end{lemma}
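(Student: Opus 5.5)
The plan is to use homotopy invariance of the spectral flow for a two-parameter family over the square $U=[0,1]_s\times[0,1]_t$, which is a compact connected subset of $\R^2$. I will construct a continuous path $t\mapsto (G_t,f_t)$ with $(G_0,f_0)=(G,f)$ and $(G_1,f_1)=(\widetilde G,\widetilde f)$. Then I will consider the family $\cl\Dirac_{E,s,t}$ obtained from \eqref{FamilyOfDiracOpWithPathOfConnOnSphere} with metric $G_t$, map $f_t$ and connection $f_t^*\nabla^s$. These are transported to $L^2_G(\spinor N\otimes E)$ via $\sigma^t$ and the parallel transport isomorphisms $P_t$.

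Gap continuity on all of $U$ comes from \cref{ConeDiracGapTopology}, applied with deformation (a) the connections $\nabla^s$, (b) the maps $f_t$, and (c) the metrics $G_t$. Since $\partial U$ is null-homotopic in $U$, the spectral flow around $\partial U$ vanishes. The horizontal edges $t=0$ and $t=1$ are exactly the two spectral flows in the statement. It therefore suffices to show that the vertical edges $s=0$ and $s=1$ contribute zero. By \eqref{eq:DiracSpectrumEqualities}, which is a local computation valid for every $t$ because $0$ and $1$ are the Killing-spinor parameters for any comparison map, $\spec(\cl\Dirac_{E,0,t})=\spec(\cl\Dirac_{E,1,t})=\spec(\cl\Dirac_{G_t})$. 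If $\scal_{G_t}\ge C'>0$ uniformly in $t$, then \cref{ConeDiracSpectralGap} shows that $0$ never lies in the spectrum along these edges, so their spectral flow is zero.

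\textbf{Deformation of the map.} Near the cone point $x_i$ we have $\dist(f(x,r),\metriccl f(x_i))\le\Lambda r$. Pick a cutoff $\beta\in C^\infty([0,\vartheta))$ with $\beta=0$ near $0$ and $\beta=1$ for $r\ge\delta$. Put $f'(x,r)=\exp_{p_i}\bigl(\beta(r)\exp_{p_i}^{-1}f(x,r)\bigr)$, where $p_i=\metriccl f(x_i)$. The bound $|\beta'|\lesssim 1/\delta$ together with $|\exp^{-1}_{p_i}f|\le\Lambda r\le \Lambda\delta$ on $\supp\beta'$ keeps $f'$ uniformly Lipschitz, and $f'$ is constant near the cone points. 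Next, replace $f'$ by a smooth $\widetilde f$ that is uniformly close to $f'$ and still constant near the tips, for instance by mollifying on the bulk.

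The interpolation uses geodesic homotopies $f_t(x)=\exp_{f(x)}\bigl(t\exp^{-1}_{f(x)}\widetilde f(x)\bigr)$, chained appropriately through $f'$. Here $\dist(f,\widetilde f)<{\rm inj}$, which can be arranged by taking $\delta$ small. The derivative of this family depends continuously on $t$ in $L^\infty$, which is the form of continuity actually used in the proof of \cref{ConeDiracGapTopology} through the coefficients $\omega_s$. Note that only $C^0$-closeness to $f$ is needed, not $C^{0,1}$-closeness, because the map deformation never affects the vertical edges.

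\textbf{Deformation of the metric.} This is the step I expect to be the main obstacle. Set $G_t=dr^2+r^2g^t_r$ near the tips with $g^t_r:=g_{\lambda_t(r)}$, where $\lambda_t(r)=r$ for $r\ge\delta$, $\lambda_1(r)\equiv\delta'$ small near $0$, and $\lambda_t$ interpolates linearly in $t$. The family $G_t$ is unchanged on the bulk, so $G_1$ is a straight cone near the tips, with link metric $g_{\delta'}$ or, after a further short path, $g_0$. I must then check two things.
\begin{enumerate}[label=(\roman*)]
\item $\scal_{g^t_0}>1$ for all $t$, so that \cref{ConeDiracSAFredholmOp} applies throughout. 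This holds because $g^t_0$ is $C^2$-close to $g_0$ and $\scal_{g_0}\ge n(n-1)>1$.
\item $\scal_{G_t}$ stays uniformly positive. The cone formula $\scal_{G_t}=r^{-2}\bigl(\scal_{g^t_r}-n(n-1)\bigr)+O\bigl(r^{-2}(|r\partial_rg^t_r|+|r^2\partial_r^2g^t_r|)\bigr)$ shows why this is delicate. The terms $r\partial_r g_{\lambda_t(r)}$ are small by the cone-metric assumptions and the choice of a gentle $\lambda_t$, but $\scal_{g_0}-n(n-1)$ might only be $\ge0$.
\end{enumerate}

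For (ii) I would first try to exploit that $\scal_G\ge n(n+1)$ forces $\scal_{g_r}-n(n-1)\ge n(n+1)r^2-o(1)$ in an averaged sense. If that fails, I would fall back on a weaker requirement: only invertibility of $\cl\Dirac_{G_t}$ is needed. This could be obtained from the integrated Schr\"odinger--Lichnerowicz formula of \cite{CHSS}*{Theorem 4.12} with an error term from the Hardy-type bound $\norm{\tfrac1r u}\lesssim\norm{\cl\Dirac u}$ coming from the spectral gap of the link. That bound should absorb the small mean-curvature perturbations as $\delta\to0$.
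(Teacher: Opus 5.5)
The overall architecture matches the paper's: a gap-continuous family over $[0,1]_s\times[0,1]_t$ via \cref{ConeDiracGapTopology}, deforming the metric to a straight cone and the map to a smooth map that is locally constant at the tips. Your item (i) correctly secures the Witt condition. The gap is in how you handle the vertical edges $s=0$ and $s=1$.

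You want $0\notin\spec(\cl\Dirac_{E,0,t})$ and $0\notin\spec(\cl\Dirac_{E,1,t})$ for all $t$. You intend to get this from a uniform bound $\scal_{G_t}\ge C'>0$ and \cref{ConeDiracSpectralGap}, but you never establish that bound. It is a genuinely delicate gluing problem. Near the tip, the leading term $r^{-2}(\scal_{g^t_r}-n(n-1))$ is only known to have a nonnegative limit ($\scal_{g_0}\ge n(n-1)$). Meanwhile the transition region of $\lambda_t$ produces terms $r^{-2}O(\abs{r\partial_r g^t_r}+\abs{r^2\partial_r^2 g^t_r})$ of the same order, with no sign control. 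If you straighten all the way to $g_0$, as you offer, and $g_0$ is round (as for the round sphere itself), the tip region becomes flat and no uniform $C'$ exists. Your fallback via a Hardy-type inequality is a hope, not an argument.

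The missing observation is that invertibility along the vertical edges is not needed. As you note yourself, for every $t$ both $\cl\Dirac_{E,0,t}$ and $\cl\Dirac_{E,1,t}$ are unitarily equivalent to $\cl\Dirac_{G_t}\otimes\id_{\C^{2^k}}$. The equivalence comes from the Killing-spinor trivializations for $f_t$, composed with $P_t$ and $\sigma^t$. Hence the two operators have the same spectrum with multiplicities. The spectral flow of a gap-continuous path is computed from a partition $t_0<\dots<t_N$ and levels $\pm a_j$ outside the spectrum, by counting dimensions of spectral projections. It therefore depends only on these spectral data, so $\specflow(t\mapsto\cl\Dirac_{E,0,t})=\specflow(t\mapsto\cl\Dirac_{E,1,t})$. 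The two vertical edges enter the null-homotopic boundary loop with opposite orientations and cancel, which gives equality of the horizontal edges. This is what makes the paper's appeal to homotopy invariance work. It is also why the paper only arranges $\scal_{g_{0,t}}>1$ along its interpolation $t\gamma+(1-t)g_r$ and never controls $\scal_{G_t}$. With this replacement, your item (ii) can simply be dropped.

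A smaller point concerns the map. The lemma requires $\widetilde f$ to be constant on a single open set $U$ whose closure contains all cone points. This common value $y_0$ is used in \cref{SpecFlowFormula}, where $\double F$ is set equal to $y_0$ on all of $\double\Omega\setminus\Omega$ so that $B_s$ vanishes there. Your $\widetilde f$ equals $\metriccl f(x_i)$ near $x_i$, and these values may differ. You need an extra step, as in the paper:
\begin{enumerate}
\item Choose $y$ not antipodal to any $\metriccl f(x_i)$, so that a ball $B_a(y)$ with $a<\pi$ contains all $\metriccl f(x_i)$.
\item Post-compose with a homotopy of degree-one maps $\Phi_t$ of $\sphere^{n+1}$, running from the identity to a map that collapses $B_a(y)$ onto $y$.
\item Keep the pointwise distance to $f$ below $\pi=\mathrm{inj}(\sphere^{n+1})$ throughout, so that the transports $P_t$ stay defined.
\end{enumerate}
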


\begin{proof}
	In the following we use the notation from \cref{ConeManifold}.

	Let $\gamma$ be a smooth Riemannian metric on $M$ and let $\varphi\in C_\cc^\infty([0,\vartheta))$ be a cut-off function satisfying $\varphi \equiv 1$ on $\left[0,\frac\vartheta2\right]$.
	For $t\in [0,1]$, we define
	\begin{equation*}
		g_{r,t} := \varphi(r)\left(t\gamma + (1-t)g_r\right) + (1-\varphi(r))g_r.
	\end{equation*}

	Due to \cite{CHSS}*{Proposition 4.1} and $\scal_G \geq C >0$, we have $\scal_{g_0}\geq n(n-1)$.
	Thus, we can choose $\gamma$ in such a way that $\scal_{g_{0,t}} > 1$ for all $t\in [0,1]$.

	We set
	\begin{equation*}
		G_t\vert_\cone := \nu^*(dr^2 + r^2 g_{r,t})
	\end{equation*}
	and 
	\begin{equation*}
		G_t\vert_{N\setminus \cone} := G\vert_{N\setminus \cone}.
	\end{equation*}
	
	Obviously, we have $G_0 = G$ and 
	\begin{equation*}
		G_1\vert_{\cone'} = \nu^*(dr^2 + r^2 \gamma)
	\end{equation*}
	for $\cone' = M\times \left(0,\frac\vartheta2\right)$.

	Recall that $\{x_1,\ldots, x_\ell\} = \metriccl N\setminus N$ is the set of cone points of the metric completion $\metriccl N$ and $\metriccl f\colon \bar N\to \sphere^{n+1}$ denotes the unique continuous continuation of $f$. 
	For every $y\in \sphere^{n+1}\setminus \{\metriccl f(x_1), \ldots, \metriccl f(x_\ell)\}$, let $B_a(y)\subset B_b(y)\subset \sphere^{n+1}$ be open balls of radii $0<a<b<\pi$ which are centered at $y$ and satisfy $\metriccl f(x_i)\in B_a(y)$ for all $i=1,\ldots,\ell$.
	Choose a cutoff function $\chi\in C_\cc^\infty((0,b],[0,1])$ such that $\chi \equiv 1$ on $[a,b]$.
	For $t\in [0,1]$, we define a family of maps
	\begin{equation*}
		\widetilde\Phi_t \colon B_b(y)\to B_b(y), \qquad (r,x)\mapsto (\chi(tr + (1-t)a)r,x)
	\end{equation*}
	using the description $(r,x)\in B_b(y) \cong [0,b)\times \sphere^n/\{0\}\cup \sphere^n$.
	We obtain a smooth family of maps
	\begin{equation*}
		\Phi_t\colon \sphere^{n+1}\to \sphere^{n+1}
	\end{equation*}
	by extending $\widetilde\Phi_t$ by the identity. 
	It follows that $\deg(\Phi_t) = 1$ for all $t$ and $\Phi_0 = \id_{\sphere^{n+1}}$.

	Let $V = f^{-1}(B_a(y))$ and let $U \subset  N$ be an open set containing all cone points such that the closure satisfies $ \metriccl U \subset V$. 
	The set $N\setminus V$ is compact.
	Choose a continuous path of Lipschitz functions $f_t\colon N \to \sphere^{n+1}$ such that for every $(t,x)\in [0,1]\times N$, we have that
	\begin{itemize}
		\item $\dist(f(x), f_t(x))< \pi - b$,
		\item $f_0 = f$,
		\item $f_1$ is smooth on $N\setminus V$ and
		\item $f_t\vert_{U} = f\vert_U$.
	\end{itemize}

	Consider the family of Lipschitz maps
	\begin{equation*}
		\widetilde f_t := \Phi_t \circ f_t \colon N \to \sphere^{n+1}.
	\end{equation*}
	By construction, we have $\deg(\widetilde f_1) = \deg( f)$, $\widetilde f_0 = f$ and $\dist(f(x), \widetilde f_t(x)) < \pi$ for all $(t,x)\in [0,1]\times N$.
	Moreover, $\widetilde  f_1$ is smooth and constant on a neighborhood of the cone points and maps all cone points to the same image point. 

	We apply \cref{ConeDiracGapTopology} to obtain a homotopy of twisted Dirac operators $[0,1]^2\ni (s,t)\mapsto \Dirac_{E,s,t}$, which is continuous in the gap topology. 
	We have $\Dirac_{E,s,0} = \Dirac_{E,s}$ and $\Dirac_{E,s,1}$ is a Dirac operator associated with a straight-cone metric and twisted with $\big(\widetilde  f_1\big)^*(\spinor\sphere^{n+1}, \nabla^s)$.
	This concludes the proof by the homotopy invariance of the spectral flow (see \cite{BoosLeschPhillips}).
\end{proof}

\begin{proposition}\label{SpecFlowFormula}
	We have
	\begin{equation*}
		\abs{\specflow\left([0,1]\ni s\mapsto \cl\Dirac_{E,s}\right)} = \abs{\deg(f)}.
	\end{equation*}
\end{proposition}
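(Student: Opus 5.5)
By \cref{specFlowDeformation}, we may replace $G$ by a metric $\widetilde G$ with straight-cone singularities and $f$ by a smooth map $\widetilde f$ that is constant on a neighborhood $U$ of the cone points, without changing the spectral flow. The degree is also unchanged, since the deformation $t\mapsto\widetilde f_t$ in that proof is a homotopy of maps of pairs, so $\deg(\widetilde f)=\deg(f)$. From now on we assume $G$ is a straight cone metric near the tips and $f\equiv y_0$ on $U$. Then $E=f^*\spinor\sphere^{n+1}$ is canonically trivial over $U$, and each $\nabla^{E,s}$ restricts there to the trivial connection. By \eqref{eq:DiracPathExplicit}, $B_s$ vanishes on $U$ because $df=0$ there. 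Hence on $U$ every $\Dirac_{E,s}$ coincides with the untwisted Dirac operator $\Dirac\otimes\id_{\C^{2^{k+1}}}$, and the whole family is constant in $s$ near the singularities.

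Next we double. Choose $r_0$ so that the region $\{r<r_0\}$ lies in $U$. Let $\double N$ be the closed manifold obtained by cutting $N$ along $M\times\{r_0\}$ and gluing two copies of the bulk part $N_{\geq r_0}$, the second with reversed orientation. We put a product metric near the gluing hypersurface and extend $f$ by the constant $y_0$ across the collar. On the second copy we use $f$ again, composed with a reflection of $\sphere^{n+1}$ fixing $y_0$ if needed, so that the resulting map $\double f$ has degree $\deg f$ or $2\deg f$ up to sign, depending on the orientation convention. The cleaner alternative, and the one I would pursue, is to double $N$ against a second manifold on which the twist is trivial. Concretely, cap the truncated cone by the closed manifold $\double(N_{\geq r_0})$ and compare it with the untwisted double.

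For the comparison I would use a splitting (gluing) formula for spectral flow in the spirit of Getzler \cite{Getzler}. Write the spectral flow of each family as the integral of the odd spectral-flow one-form $\tr(\dot D_s e^{-\epsilon D_s^2})$ plus eta-invariant endpoint terms. At the endpoints, \eqref{eq:DiracUnitaryConjugation} gives $D_1=V^*D_0V$, so the eta terms cancel, and the spectral flow equals the integral of a local density $\int_0^1\tr(\dot D_s e^{-\epsilon D_s^2})\,ds$. Since $\dot D_s=B_s'$ is supported where $df\neq 0$, the small-$\epsilon$ asymptotics localize away from $U$. This is because $\dot D_s$ vanishes on $U$, and finite propagation speed or heat kernel locality bounds the off-diagonal contributions. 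The same local integrand computes the spectral flow of the corresponding family on the closed manifold $\double N$ built as above, where, on the added piece, the twisting map is constant and contributes nothing.

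For closed odd-dimensional manifolds, \cite{LiSuWang} and \cite{Baer_2024} give $\abs{\specflow}=\abs{\deg}$ for exactly this family. Applying this to $\double N$, with $\double f$ constant on the added part, yields $\abs{\specflow(\cl\Dirac_{E,s})}=\abs{\deg\double f}=\abs{\deg f}$.

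The main obstacle is the localization step. One must justify that on the cone manifold the spectral flow equals the integral of the local one-form with vanishing endpoint correction. This requires trace-class properties of $\dot D_s e^{-\epsilon\cl\Dirac_{E,s}^2}$ and small-time heat asymptotics on a straight cone. Here the straightness of the cone and the Witt gap from \cref{ConeDiracSpectralGap} are essential, since $\dot D_s$ is a bounded operator supported in the compact bulk. I would first try Getzler's formula directly with $\epsilon$ fixed, noting that the integrand is independent of how the manifold is completed near the cone. This is justified by comparing heat kernels on $N$ and $\double N$ over the support of $\dot D_s$ via Duhamel's principle, letting $\epsilon\to0$, and using that the spectral flow is an integer independent of $\epsilon$.
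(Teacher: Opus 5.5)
Your proposal takes essentially the same route as the paper. You reduce via \cref{specFlowDeformation} to a straight cone metric and a map constant near the tips, express the spectral flow by Getzler's formula using the unitary conjugacy \eqref{eq:DiracUnitaryConjugation}, compare with the closed double $\double\Omega$ on which the map is constant on the added piece (legitimate because the variation of the family vanishes on $U$), and conclude with the closed case from \cite{Baer_2024}. The one analytic input you leave open, that $e^{-\varepsilon\cl\Dirac_{E,s}^2}$ is trace class so that Getzler's formula applies on the singular manifold, is supplied in the paper by \cite{Chou} for straight cones, which is exactly why the reduction to a straight cone is made; your explicit $\varepsilon\to 0$ Duhamel comparison combined with integrality is a more careful version of the paper's brief localization step.
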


	\begin{proof}
		Due to \cref{specFlowDeformation}, we can assume that $\Dirac_{E,s}$ is associated with a straight-cone metric and that $f$ is smooth and constant on an open subset $U\subset N$ that contains all the cone points.
		Set $y_0 = f(U)$.

		From \cite{Chou}, it follows that the heat kernel of $\cl\Dirac_{E,s}$ is trace-class.
		Equation \eqref{eq:DiracUnitaryConjugation} and \cite{Getzler}*{Corollary 2.7} imply that
		\begin{equation}\label{GetzlerFormula}
			\specflow\left([0,1]\ni s\mapsto \cl\Dirac_{E,s}\right) = \left(\tfrac\varepsilon \pi\right)^{1/2}\int_0^1  \tr\left(B_s e^{-\varepsilon\cl\Dirac_{E,s}^2}\right)\, ds
		\end{equation}
		for every $\varepsilon >0$.
		
		Let $\Omega\subset N$ be a compact submanifold of $N$ with boundary such that $N\setminus U\subset {\rm int}(\Omega)$.
		Let $\double \Omega$ be the double of $\Omega$ and define $\double F \colon \double \Omega \to \sphere^{n+1}$ by 
		\begin{equation*}
			\double F = \begin{cases}
				f \qquad &\text{on }\Omega,\\
				y_0 &\text{on } \double \Omega\setminus \Omega.
			\end{cases}
		\end{equation*}
		Observe that $\deg\left(\double F\right) = \deg(f)$.
		Furthermore, let $\double G$ be a smooth Riemannian metric on $\double \Omega$ that extends $G\vert_{\Omega}$.
		We denote by 
		\begin{equation*}
			[0,1]\ni s\mapsto \Dirac_{\double E,s}
		\end{equation*}
		the Dirac operator associated with $\double G$ and twisted with $\left(\double F\right)^*(\spinor \sphere^{n+1}, \nabla^s)$.
		By construction, $\Dirac_{\double E,s}$ and $\Dirac_{E,s}$ coincide when restricted to $\Omega$.
		Since $B_s$ vanishes on $U$, the restriction $B_s\vert_\Omega$ extends trivially to $\double \Omega$, and equation \eqref{GetzlerFormula} implies
		\begin{equation*}
			\specflow\left([0,1]\ni s\mapsto \cl\Dirac_{E,s}\right) = \specflow\left([0,1]\ni s\mapsto \cl\Dirac_{\double E,s}\right).
		\end{equation*}
		
		The statement now follows from \cite{Baer_2024}*{Section 5.2}.
	\end{proof}

\section{Proof of Main Theorem}\label{sec:MainTheoremProof}

Consider \cref{GeomSetup} with $W = \sphere^{n+1}$ and with $f\colon (N,G)\to (\sphere^{n+1}, g_{\sphere^{n+1}})$ a Lipschitz continuous map that is area non-increasing and has non-zero degree.
Moreover, let $\left(E_0, \nabla^{E_0}\right) = \left(\spinor \sphere^{n+1}, \nabla^{\spinor \sphere^{n+1}}\right)$ be the spinor bundle with spinor connection over the $(n+1)$-sphere and let $(E, \nabla^{E,s})$ be the pullback bundle along $f$ with the family of connections defined in \eqref{Eq::FamilyOfConnections}.
Let  
\begin{equation*}
	\Dirac_{E,s}\colon L^2_G(\spinor N\otimes E)\supset\Lip_\cc(\spinor N\otimes E) \to L^2_G(\spinor N\otimes E)
\end{equation*}
be the associated family of twisted Dirac operators.
Again, we suppress the isometry $\sigma^s$, defined in \eqref{FamilyOfLTwoIsometries}, from the notation.

Due to \cref{ConeDiracSAFredholmOp} and \cref{ConeDiracGapTopology}, the minimal closed extension $\cl\Dirac_{E,s}$ is a self-adjoint Fredholm operator for every $s\in [0,1]$ and the path $[0,1]\ni s\mapsto \cl\Dirac_{E,s}$ is continuous in the gap topology.
Therefore, the spectral flow is well-defined and \cref{SpecFlowFormula} yields
\begin{equation*}
	\abs{\specflow\left([0,1]\ni s\mapsto \cl\Dirac_{E,s}\right)} = \abs{\deg(f)} \neq 0.
\end{equation*}
Hence there is an $s_0\in [0,1]$ such that $\cl\Dirac_{E,s_0}$ is not invertible, and we find a non-zero harmonic spinor $u\in \ker(\cl\Dirac_{E,s_0})$.

Let $\curvOp^E_s$ be the curvature endomorphism on $\spinor N\otimes E$ associated with the connection $\nabla^{E,s}$.
Using the fact that $f$ is area non-increasing, a standard estimate for the curvature endomorphism shows that for every $p\in N$ where $f$ is differentiable and every $s\in [0,1]$, it holds (see \cite{LiSuWang}*{Equation (3.9)})
\begin{equation*}
	\innerprod{\curvOp^E_s v}{v} \geq -s(s-1)(n+1)n\cdot \abs{v}^2\geq -\tfrac14 (n+1)n \cdot \abs{v}^2\qquad \text{for all } v\in \left(\spinor N\otimes E\right)_p.
\end{equation*}
The equality case occurs for every $v\in \left(\spinor N\otimes E\right)_p$ if and only if $s=\frac12$ and $d_pf$ is an isometry.
If we are in the equality situation, it follows from \cite{Baer_2024}*{Lemma 2} that\footnote{In \cite{Baer_2024}, the parameter $s$ is shifted by $-\frac12$ and runs over $\left[-\frac12,\frac12\right]$ instead of $[0,1]$.}
\begin{equation}\label{IsometryWithPairOfTangentVectors}
	\left(X\cdot Y\otimes d_pf\left(X^G\right)\cdot d_pf\left(Y^G\right)\right)\cdot v  = v
\end{equation}
for all $\Gamma$-orthonormal vectors $X,Y\in T_pN$ and for all $v\in \left(\spinor N\otimes E\right)_p$.
Recall the notation $X^G = b_G(X)$ introduced in \eqref{NotationMetricMediator}.

From the integrated Schr\"odinger-Lichnerowicz formula for manifolds with cone-like singularities (see \cite{CHSS}*{Theorem 4.12}) and the assumption $\scal_G \geq (n+1)n$, we obtain
\begin{equation*}
	0 = \norm{\cl\Dirac_{E,s_0}u}_{L^2}^2 = \norm{\nabla^{E,s_0} u}_{L^2}^2 + \tfrac14\innerprod{\scal_G u}{u} + \innerprod{\curvOp^E_{s_0} u}{u}_{L^2} \geq 0.
\end{equation*}
Consequently, we get $\scal_G \equiv (n+1)n$, and $f$ is a local isometry.

The rest of the proof is similar to \cite{CHSS}*{Section 4.5}.
However, for the sake of completeness, we repeat the argument. 

Recall that $\metriccl f\colon \metriccl N\to \sphere^{n+1}$ denotes the unique continuous extension of $f$ to the metric completion $\metriccl N$ of $N$ and $\metriccl N \setminus N = \{x_1, \ldots, x_\ell\}$ is the set of cone points of $\metriccl N$. 
Put
\begin{equation*}
	\Sigma := f^{-1}\left(\left\{ \metriccl f(x_1), \ldots, \metriccl f(x_\ell) \right\}\right)\subset N.
\end{equation*}
We showed that
\begin{equation*}
	f\vert_{N\setminus \Sigma}\colon N\setminus \Sigma \to \sphere^{n+1}\setminus \left\{\metriccl f(x_1), \ldots, \metriccl f(x_\ell)\right\}
\end{equation*}
is a local isometry. 
Moreover, as $\metriccl{f}\colon \metriccl{N}\to \sphere^{n+1}$ is proper, its restriction $f\vert_{N\setminus \Sigma}$ is also proper and $\sphere^{n+1}\setminus \left\{\metriccl f(x_1), \ldots, \metriccl f(x_\ell)\right\}$ is simply connected.

To get from local isometry to global isometry, we want to apply \cite{CHS}*{Theorem 2.4}.
In order to do so, it remains to show that $d_pf$ is either orientation-preserving or orientation-reversing for every $p\in N$ where $f$ is differentiable.
The argument relies on \eqref{IsometryWithPairOfTangentVectors} and carries over from \cite{Baer_2024}*{Section 5.7} to our non-compact setting since the computations are local and the argument involves elliptic regularity, which also holds in the non-compact setting (see \cite{GilbargTrudinger}*{Corollary 8.11}). 
It follows that $f$ is a metric isometry. 

Assume that $\Sigma \neq \emptyset$ and let $p\in \Sigma$ such that $f(p) = \metriccl{f}(x_i)$ for some $i = 1, \ldots, \ell$.
By continuity of $\metriccl{f}$, we find $q\in N$ near $x_i\in \metriccl{N}$ such that 
\begin{equation*}
	\dist_N(p,q) > \dist_{\sphere^{n+1}\setminus \left\{\metriccl f(x_1), \ldots, \metriccl f(x_\ell)\right\}}(f(p), f(q)).
\end{equation*}
This is a contradiction and hence $\Sigma = \emptyset$.

The Myers-Steenrod theorem implies that $f$ is a Riemannian isometry since the involved metrics are smooth. 
This concludes the proof.

\bibliographystyle{plain}
\bibliography{references}

\end{document}